\definecolor{lgrey}{HTML}{808080}
\definecolor{dgrey}{HTML}{404040}
\def\input@path{{Numerical results from joint reconstruction/}}
\newtheorem{theorem}{Theorem}
\newtheorem{lemma}{Lemma}
\newtheorem{corollary}{Corollary}
\newtheorem{remark}{Remark}
\newtheorem{defn}{Definition}
\newcommand{\TV}{\mathrm{TV}}
\newcommand{\R}{\mathbb{R}}
\newcommand{\fL}{\mathrm{L}_{\beta}}
\DeclareMathOperator*{\argmin}{argmin}
\title{Emission tomography with a multi-bang assumption on attenuation} % add the name of the project
\author{Sean Holman and Philip Richardson} % your student number here
\begin{document}
\maketitle
\abstract{We consider the problem of joint reconstruction of both attenuation $a$ and source density $f$ in emission tomography in two dimensions. This is sometimes called the Single Photon Emission Computed Tomography (SPECT) identification problem, or referred to as attenuation correction in SPECT. Assuming that $a$ takes only finitely many values and $f \in C_c^1(\mathbb{R}^2)$ we are able to characterise singularities appearing in the Attenuated Radon Transform $R_a f$, which models emission tomography data. Using this characterisation we prove that both $a$ and $f$ can be determined in some circumstances. We also propose a numerical algorithm to jointly compute $a$ and $f$ from $R_af$ based on a weakly convex regularizer when $a$ only takes values from a known finite list, and show that this algorithm performs well on some synthetic examples.}
\section{Introduction}

In this paper we consider the problem of emission tomography in which we seek to recover both attenuation $a$ and radiation source density $f$ based on passive radiation measurements. We will only consider the problem in two dimensions, and use the photon transport equation for the forward model. Assume that $a$ and $f$ are compactly supported functions on $\mathbb{R}^2$ and let $\Omega\subset \R^{2}$ be a bounded set containing the supports. Then the photon transport equation is 
\begin{equation}\label{phtranseq}
\begin{split}
\theta\cdot\nabla{u(x,\theta)}+a(x)u(x,\theta) = f(x), \quad (x,\theta)\in \Omega\times\mathbb{S}^{1},\\
u|_{\Gamma^{-}}=0,
\end{split}
\end{equation}
where $u(x,\theta)$ is the photon flux through the point $x$ in a unit direction $\theta\in\mathbb{S}^{1}$ and 
$$ \Gamma^{-}=\{(x,\theta)\in\partial\Omega\times\mathbb{S}^{1}\ |\ \theta\cdot \underline{n}(x)\leq 0\},$$
where $\underline{n}(x)$ is the unit outward pointing normal to the boundary at $x$. Intuitively the differential equation in \eqref{phtranseq} states that photons are created by a source with density $f$ and then move along straight lines while being attenuated at a rate given by $a$. The boundary condition in \eqref{phtranseq} requires that we have no radiation entering the domain $\Omega$. The photon transport equation can be solved (for example by method of characteristics as in \cite{atrt}) to find the solution $u(x,\theta)$ which satisfies
\begin{equation}\label{eq,2} \lim_{\tau\rightarrow\infty}u(s \theta^\perp +\tau\theta,\theta)=\int_{-\infty}^{\infty}\!f(s \theta^\perp+t\theta)e^{-Da(s \theta^{\perp}+t\theta,\theta)}\,\mathrm{d}t,
\end{equation}
where $\theta^\perp\in \mathbb{S}^1$ denotes $\theta$ rotated in the anticlockwise direction by $\pi/2$ and $Da$ is the beam transform of $a$ defined as 
$$	Da(x,\theta)=\int_{0}^{\infty}\! a(x+t\theta)\,\mathrm{d}t.$$
We define the Attenuated Radon Transform(AtRT)\cite{atrt} of $f$ with attenuation $a$ to be the integral on the right hand side of \eqref{eq,2}, and denote this $R_af(s,\theta)$. The basic question we investigate in this paper is whether it is possible to determine both of $a$ and $f$ from $R_a f$.

When $a$ is fixed, the mapping $f \mapsto R_a f$ is known to be analytically invertible under certain mild conditions, and when these hold a closed form solution for the inverse is known, see \cite{inversion,Identification2,Identification3}. The problem of recovering both $a$ and $f$ from the AtRT is sometimes known as the Single Photon Emission Computed Tomography (SPECT) identification problem \cite{Ident,Identification1}, and one particular result, given in \cite{Quinto}, shows non-uniqueness for radial $a$ and $f$. That is, there are different pairs of $a$ and $f$ depending only on distance to the origin which give the same AtRT. The issue persists for maps which are also ``close" to being radial, as shown in \cite{GourionNoll}, and numerical investigations in \cite{Phil} also show evidence of non-uniqueness in other situations. One should also note that if $f = 0$, then it is trivially clear that $a$ can be anything and still $R_af = 0$.

Despite these negative results, under some additional hypotheses determination of $a$ and $f$ from $R_a f$ can still be possible. In medical imaging literature there has been quite a lot of work aimed towards numerical methods for what is more typically called ``attenuation correction" in that context (e.g. \cite{attcorr1,attcorr2,attcorr3} and many others), although for practical applications of SPECT the standard method is to obtain the attenuation through a separate transmission CT scan. Some studies have also attempted to make use of scattered photons for attenuation identification in SPECT (\cite{scatter1,scatter2}) although the forward model must be enriched to include the scattering, and so the mathematical problem is different. There are not many positive theoretical results concerning recovery of both $a$ and $f$ from emission data alone. In the case when $Da$ in \eqref{eq,2} is replaced by a constant $\mu$ times $t$ the transform is called the exponential Radon transform, and in \cite{Identification1} it is shown that $\mu$ can be determined from the exponential Radon transform when $f$ is unknown, but not radial. A linearisation of the problem is studied from a microlocal point of view in \cite{Ident}, and used to establish some results for the nonlinear problem as well. A range characterisation of $f \mapsto R_a f$ for given $a$ originally found in \cite{inversion} and further explored in \cite{Novikovrange} was used in \cite{BalJollivet} to analyse recovery of both $a$ and $f$. Perhaps closest to the results of the present paper is the work in \cite{Bukgeimstar} which shows that unique recovery of $a$ and $f$ is possible when $a$ is a multiple of the characteristic function of a star shaped region. In this work we assume that $a$ takes on only finitely many values, and refer to such $a$ as {\it multi-bang}. As detailed below, we are able to show unique recovery of such $a$ from $R_af$ in some cases.

Recently the authors of \cite{MB,MBorig} introduced a convex multi-bang regularization technique intended to allow reconstructions of images in which there are only certain known values, and our line of research leading to the present paper was originally inspired by this technique. There are many applications where the multi-bang regularization technique might be useful, particularly in many forms of medical imaging, e.g. SPECT imaging\cite{GourionNoll} and X-ray Imaging\cite{limrays}. The convex multi-bang technique of \cite{MB,MBorig} was applied numerically to the problem of recovering multi-bang $a$ and $f$ from $R_a f$ in \cite{Phil} with mixed results, and in the present paper we modified the method to use a weakly convex (rather than convex) multi-bang regularization combined with Total Variation(TV) to promote the joint recovery of multi-bang $a$ and $f$ from the AtRT $R_af$. We implement this by alternating updates between $a$ and $f$ using \cite{alternating} to prove convergence. The $a$ update is the most computationally intensive step due to the nonlinearity and using recent work by \cite{Guo,weakconvex} we apply a variant of the Alternating Direction Method of Multipliers(ADMM) with a non-convex multi-bang regularizer, which we show lends itself to promoting multi-bang solutions. 

The novel contributions of this paper are summarised as follows. The precise and rigorous versions of the theoretical results, which include a few other technical assumptions, are presented later in the paper.
\begin{enumerate}
	\item Theorem \ref{thm1}. Assuming that $a$ is multi-bang, $f \in C^1_c(\mathbb{R}^2)$ is non-negative, and with some additional assumptions about the regularity of the boundaries of the regions of constant $a$, we can characterise the singularities occurring in $R_af$ as a result of the jumps in $a$.
	
	\item Theorem \ref{thm2}. If $a$ and $f$ are as in Theorem \ref{thm1} with the regions of constant $a$ being constructed using a sequence of nested convex sets, then $a$ and $f$ can be uniquely determined from $R_af$.
			
	\item We propose a numerical algorithm for joint recovery of multi-bang $a$ and $f$ for limited projection data, and demonstrate its utility with some numerical examples.
\end{enumerate}
Our proofs for Theorems \ref{thm1} and \ref{thm2} are based on a careful analysis of the singularities which occur in $R_af$ arising from the jumps of $a$, as well as a result that if $a$ is known outside of a convex region, then $R_af$ determines $f$ uniquely also outside this convex region (see Lemma \ref{lem:f}). We prove this latter result by reducing it to the problem considered in \cite[Theorem 3.1]{Bukgeim}, although other proofs using for example analytic microlocal analysis as in \cite{FSU} should also be possible.

The rest of the paper is structured as follows. Section \ref{sec:theory} gives the theoretical results of the paper introducing some necessary definitions in subsection \ref{sec:def}, stating and proving Theorem \ref{thm1} as well as some related results in subsection \ref{sec:thm1}, and stating and proving Theorem \ref{thm2} in subsection \ref{sec:thm2}.  Section \ref{sec:nummethod} describes the numerical methods used and section \ref{sec:numex} gives a number of numerical examples. The final section concludes the work in the paper and suggests avenues for further research.

\section{Identification problem theoretical results} \label{sec:theory}

\subsection{Problem set-up and definitions}\label{sec:def}

We first recall from the introduction that the AtRT, $R_af$ is defined by the formula
\begin{equation}\label{start}
R_{a}f(s,\theta)=\int_{-\infty}^{\infty}\!f(s\theta^{\perp}+t\theta)e^{-Da(s\theta^{\perp}+t\theta,\theta)}\,\mathrm{d}t.
\end{equation}
Throughout this section we will assume that $f \in C_c^1(\mathbb{R}^2)$ and that $a$ is multi-bang. Note that $(s,\theta)$ corresponds to the line tangent to $\theta$ with distance $s$ from the origin (this is slightly different from the standard parametrisation of lines in the Radon transform in which $\theta$ is normal to the line), and $R_af$ is an integral along that line. We will always parametrise $\theta \in \mathbb{S}^1$ by the angle $\omega \in \mathbb{R}$ such that $\theta = (\cos(\omega),\sin(\omega))$ and $\theta^\perp = (-\sin(\omega),\cos(
\omega))$.

We now introduce the precise definition of multi-bang.
\begin{defn}{\bf (Multi-bang)} \label{def:multibang}
We say that $a \in L^\infty(\mathbb{R}^2)$ is {\it multi-bang} if there exists a finite set $\mathcal{A} = \{a_1,\ ... \ , \ a_n\} \subset \mathbb{R}$, called the {\it admissible set}, and a collection of disjoint bounded open sets $\{\Omega_{j}\}_{j = 1}^n$ with smooth boundaries possibly having corners such that
\begin{equation} \label{multibang}
a = \sum_{j=1}^n a_j \chi_{\Omega_j}.
\end{equation}
Here $\chi_{\Omega_j}$ is the characteristic function of the set $\Omega_j$, and we assume that for all $\Omega_j$ the interior of the closure of $\Omega_j$ is equal to $\Omega_j$. We also assume that any line only intersects the boundaries $\cup_{j=1}^n \partial \Omega_j$ finitely many times. 
\end{defn}
\noindent The final hypothesis about lines intersecting the boundaries only finitely many times is added for technical reasons, and can probably be removed although we haven't proven this. It is also worthwhile to point out that for the theoretical results in section \ref{sec:theory} we do not require knowledge of the set of admissible values $\mathcal{A}$, although we do assume knowledge of the admissible values for the numerical algorithm. 

As we will see below in section \ref{sec:thm2}, we will be able to determine certain points on the boundaries of the sets $\Omega_j$ in Definition \ref{def:multibang} from $R_af$. The set of points which we can determine will be denoted $\mathcal{P}_a$, which we now define.
\begin{defn}{\bf ($\mathcal{P}_a$)}\label{def:P}
Suppose that $a$ is multi-bang with sets $\{\Omega_{j}\}_{j = 1}^n$ as in Definition \ref{def:multibang}. Then $\mathcal{P}_a$ is the set of points $x \in \partial \Omega_j$ for some $j$ such that either
\begin{enumerate}
\item\label{0curve} $\partial \Omega_j$ has non-zero curvature at $x$, or
\item\label{corner} $\partial \Omega_j$ has a corner at $x$.
\end{enumerate}
We further write $\mathcal{P}_{a,1}$ for the subset of $\mathcal{P}_a$ where the boundary has non-zero curvature as in \ref{0curve} and $\mathcal{P}_{a,2}$ for the subset of $\mathcal{P}_a$ of corners as in \ref{corner}.
\end{defn}
\noindent As we might expect from microlocal analysis (e.g. see \cite{Ident}) the jumps in $a$ along the boundaries of $\Omega_j$ lead to singularities in $R_af$ at points $(s,\theta)$ corresponding to lines which are either tangent to the boundaries $\partial \Omega_j$, or pass through corners of $\partial \Omega_j$. We introduce the following definition for these lines. Note that in this definition we consider lines passing through a corner of $\partial \Omega_j$ that are also tangent to one of the branches of the corner to be tangent to $\partial \Omega_j$ (so there would always be at least two lines passing through a corner that are also tangent).
\begin{defn}{\bf($ \mathcal{K}_a$)} \label{def:K}
Suppose that $a$ is multi-bang with sets $\{\Omega_{j}\}_{j = 1}^n$ as in Definition \ref{def:multibang} and $\mathcal{P}_a$ is as in Definition \ref{def:P}. We define $\mathcal{K}^0_a$ to be the subset of $\{ (s,\theta) \ : \ s \in \mathbb{R}, \quad \theta \in \mathbb{S}^1\}$ such that the line corresponding of $(s,\theta)$ is either tangent to $\partial \Omega_j$ or passes through a corner of $\partial \Omega_j$ for some $j$. We further define $\mathcal{K}_a \subset \mathcal{K}^0_a$ to be the same set with the added requirement that if the line is tangent, then the tangency must be at a point where $\partial \Omega_j$ has nonzero curvature. Finally, we define subsets of $\mathcal{K}_a$ which are the set $\mathcal{K}_{a,1}$ of lines tangent at a point where the curvature is non-zero, and $\mathcal{K}_{a,2}$ the set of lines passing through corners. (Note that $\mathcal{K}_{a,1}$ and $\mathcal{K}_{a,2}$ may not be disjoint.)
\end{defn}

%We begin by making the following assumptions on $a$, suppose that the boundary of the $a$ map is $\partial{D}$ then.
%\begin{enumerate}
%	\item If we write $\partial{D}=\partial{C_{1}}\cup \partial{C_2}\cup\cdots \cup \partial{C_n}$ where each $C_{i}$ is disjoint then each $C_{i}$ bounds a convex shape which is a manifold with finitely many corners \cite{manifoldcorner}. I.e the convex shapes making up the image can have boundary with is smooth, with the possible exception of finitely many corners.
%	\item Any tangent line to the boundary at a point $\omega$ does not intersect the boundary tangentially at any other point. 
%\end{enumerate}

\noindent For Theorem \ref{thm2} we will also require an additional definition which describes precisely what we meant when we said the regions of constant $a$ are constructed using a sequence of nested convex sets.

\begin{defn}{\bf(Nicely multi-bang)} \label{def:nicemb}
We say that $a$ is nicely multi-bang if $a$ is multi-bang and can furthermore be written in the form
\begin{equation}\label{eq:nicelymb}
a = \sum_{i=1}^N c_j \chi_{C_j}
\end{equation}
where the sets $C_j$ are all convex, bounded, open with smooth boundary possibly having corners and nested in the sense that
\[
C_n \Subset C_{n-1} \Subset \ ... \ \Subset C_1.
\]
Here $C_{j} \Subset C_{j-1}$ means that $C_{j}$ is contained in a compact set that is contained in $C_{j-1}$.
\end{defn}

\noindent In the next section we will state and prove Theorem \ref{thm1} as well as some other related results.

\subsection{Theorem \ref{thm1} and related results} \label{sec:thm1} 

We start the section with the statement of Theorem \ref{thm1}.

\begin{theorem} \label{thm1}
Suppose that $f \in C^1_c(\mathbb{R}^2)$ is non-negative and $a$ is multi-bang with sets $\Omega_j$ as given in Definition \ref{def:multibang}. The theorem has two parts corresponding to $\mathcal{P}_{a,1}$ and $\mathcal{P}_{a,2}$.
\begin{enumerate}
\item Suppose $x \in \mathcal{P}_{a,1}$ and the line tangent to a boundary $\partial \Omega_j$ at $x$ is not tangent to a boundary anywhere else. If this tangent line is given by $(s^*,\theta^*)$ with $\theta^* = (\cos(\omega^*),\sin(\omega^*))$ and the ray $\{x + t \theta^* \ | \ t < 0\}$ intersects the set $\{f>0\}$, then $\partial_s R_a f(s,\theta^*)$ has a singularity of order $1/2$ at $s = s^*$, and $x$ is the unique point on the line such that
\[
\lim_{\omega\rightarrow \omega^*}|\sin(\omega-\omega^*)|^{1/2} \partial_{\omega}R(x \cdot \theta^\perp,\theta) = 0.
\]
(Recall that $\theta = (\cos(\omega),\sin(\omega))$.)

\item Suppose that $x \in \mathcal{P}_{a,2}$ lies on a corner of a boundary for a component of $\Omega_j$ and is also a corner for only one other component of some $\Omega_l$. If $(s^*,\theta^*) \in \mathcal{K}_{a,2}$ corresponds to any line passing through $x$ that passes through no other corners, is not tangent to any of the boundaries and the ray $\{x + t \theta \ | \ t < 0\}$ passes through the set $\{f>0\}$, then $\partial_s R_a f(s,\theta^*)$ has a jump across $s = s^*$.
\end{enumerate}
\end{theorem}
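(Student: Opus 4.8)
The plan is to reduce the whole statement to the behaviour of the \emph{chord lengths} of the regions $\Omega_j$ along the line, transported to $R_af$ through the attenuation factor and weighted by an attenuated line integral of $f$ that the hypotheses force to be positive. First I would fix a line and write
\[
R_af(s,\theta)=\int_{-\infty}^{\infty} f(x(t))\,e^{-h(t;s,\theta)}\,\mathrm{d}t,\qquad x(t)=s\theta^{\perp}+t\theta,\quad h(t;s,\theta)=\int_t^{\infty} a(x(u))\,\mathrm{d}u.
\]
Because $a$ is piecewise constant, the tail integral $h$ is continuous and piecewise linear in $t$, with slope $-a_j$ on the part of the line inside $\Omega_j$ and kinks exactly where the line meets $\bigcup_j\partial\Omega_j$; hence $e^{-h}$, and therefore the integrand, is continuous and piecewise smooth in $t$. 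By Definition~\ref{def:multibang} there are only finitely many crossings, and by hypothesis the distinguished line meets the boundaries tangentially (part 1) or through the corner (part 2) only at $x$, all other crossings being transversal. A partition of unity in $t$ then splits $R_af$ into a contribution from a small neighbourhood $U$ of $x$, which carries all the singular behaviour, plus a remainder depending smoothly on $(s,\theta)$. It therefore suffices to study the local chord length $L(s,\theta)=\mathcal{L}^1(\{t:\,x(t)\in\Omega_j\cap U\})$ together with $\tilde F_-=\int_{-\infty}^{0} f(x+t\theta^{*})\,e^{-h_0(t)}\,\mathrm{d}t$, the attenuated integral of $f$ along the incoming ray $\{x+t\theta^{*}:t<0\}$, where $h_0$ is the attenuation with the near-$x$ chord removed. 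Crucially, $f\ge 0$ and the assumption that this ray meets $\{f>0\}$ give $\tilde F_->0$.

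For part 1 I take local coordinates at $x$ with $\theta^{*}=(1,0)$ and $\partial\Omega_j$ equal to $y=\tfrac{\kappa}{2}\xi^2+O(\xi^3)$, $\kappa\neq 0$. A direct computation gives $L(s,\theta^{*})=c\,(s-s^{*})_+^{1/2}+(\text{smoother})$ with $c\neq 0$, the chord appearing on one side of $s^{*}$ only. Using $e^{-a_jL}-1=-a_jL+O(L^2)$ and the fact that the extra chord raises the attenuation only for $t$ to one side of $x$, the leading local contribution to $R_af$ is $-a_j\,\tilde F_-\,L(s,\theta^{*})$, the remaining terms being $O(L^2)$ or smooth. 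Since $\tilde F_->0$, the coefficient of $(s-s^{*})_+^{1/2}$ is nonzero, so $R_af(s,\theta^{*})=(\text{smooth})+C\,(s-s^{*})_+^{1/2}$ with $C\neq 0$ and $\partial_sR_af\sim \tfrac{C}{2}(s-s^{*})_+^{-1/2}$: the order-$1/2$ singularity of the statement.

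For the uniqueness assertion I compare, in $(s,\omega)$ coordinates, the curve of tangent lines $s_{\mathrm{tan}}(\omega)=s^{*}-\tfrac{1}{2\kappa}(\omega-\omega^{*})^2+O((\omega-\omega^{*})^3)$, across which $R_af$ carries this singularity, with the pencil of lines through a candidate point $p$ on the tangent line, namely $s_p(\omega)=p\cdot\theta(\omega)^{\perp}$. Since $s_p(\omega)-s_{\mathrm{tan}}(\omega)=-\big((p-x)\cdot\theta^{*}\big)(\omega-\omega^{*})+O((\omega-\omega^{*})^2)$, for $p\neq x$ the difference is linear in $\omega-\omega^{*}$ and $R_af$ restricted to the pencil again has a one-sided square-root singularity, so $\partial_\omega R_af\sim|\omega-\omega^{*}|^{-1/2}$ and $|\sin(\omega-\omega^{*})|^{1/2}\partial_\omega R_af$ fails to tend to $0$ (its magnitude scales like $|(p-x)\cdot\theta^{*}|^{1/2}$). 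For $p=x$ the pencil osculates the parabola, $s_p-s_{\mathrm{tan}}=O((\omega-\omega^{*})^2)$ has a single sign, the line cuts $\Omega_j$ on both sides with chord $\sim|\omega-\omega^{*}|$, so $R_af\sim|\omega-\omega^{*}|$ is Lipschitz, $\partial_\omega R_af$ is bounded, and the weighted limit is $0$. This singles out $x$ as the unique such point on the line.

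For part 2 the same localisation applies, but $\partial\Omega_j$ has a genuine corner at $x$ and, by hypothesis, $x$ is a corner of exactly one further region $\Omega_l$. Because $(s^{*},\theta^{*})$ is tangent to none of the edges, each edge meeting at $x$ is crossed transversally, so each wedge has chord length piecewise linear with a kink, $L(s,\theta^{*})=c\,(s-s^{*})_++(\text{smooth})$, and $\partial_s L$ jumps across $s^{*}$. Summing the contributions of $\Omega_j$ and $\Omega_l$, the leading local contribution to $R_af$ is $-\tilde F_-$ times a function of $s$ that is piecewise linear with a kink at $s^{*}$, so $\partial_sR_af$ jumps there. The hard part, in both parts, is the rigorous control of the remainder: one must verify that the $O(L^2)$ corrections, the smooth motion of the transversal crossings, and the partition-of-unity remainder all have bounded $s$- (respectively $\omega$-) derivatives, so that the square-root, respectively kink, term genuinely dominates. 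For part 2 there is the additional and, I expect, most delicate point of showing that the jump coefficient does not vanish, i.e.\ that the one-sided slopes contributed by the two edges (and the two regions $\Omega_j,\Omega_l$) do not cancel; this should follow from transversality of the line to both edges together with $\tilde F_->0$, using the hypothesis that exactly two regions meet at $x$.
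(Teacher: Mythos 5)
Your proposal is correct in substance and reaches the stated conclusions, but it organises the analysis along a genuinely different route from the paper. The paper never expands $R_af$ itself: it differentiates under the integral sign (Lemma \ref{lem:cont}) and tracks the boundary-crossing functions, proving the blow-up rates $\lim_{s\to 0^+} s^{1/2}\partial_s t_\pm(s)=\pm 1/\sqrt{2\kappa}$ and $\lim |\sin\omega|^{1/2}\partial_\omega t_\pm(\omega)=\pm\sqrt{|\ell|/(2\kappa)}$, from which it gets the closed-form limits \eqref{eq:tangent} and \eqref{eq:omega} and the jump formula \eqref{eq:corner}; you instead expand $R_af$ to first order in the local chord length $L$, and for the uniqueness of $x$ you replace the paper's direct computation along the pencil (Lemma \ref{lem:omega}) with an envelope/osculation argument: the pencil $s_p(\omega)$ crosses the curve of tangent lines $s_{\mathrm{tan}}(\omega)$ linearly unless $p=x$, where it osculates to second order. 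Your constants are consistent with the paper's (your $|(p-x)\cdot\theta^*|^{1/2}$ scaling reproduces the factor $\sqrt{2|\ell|/\kappa}$ in \eqref{eq:omega}). What the paper's route buys is the \emph{exact} limit values, not just the order of the singularity: the explicit constants in \eqref{eq:tangent} and \eqref{eq:corner} are reused later to recover the sizes of the jumps in $a$ in the proof of Theorem \ref{thm2}, so your qualitative version suffices for Theorem \ref{thm1} but not for what follows. What your route buys is geometric transparency about why $x$ is singled out; however, to make it rigorous you must establish that the expansion $R_af=(\text{smooth})+C(\omega)\,\bigl(s-s_{\mathrm{tan}}(\omega)\bigr)_+^{1/2}+O\bigl((s-s_{\mathrm{tan}}(\omega))_+\bigr)$ holds jointly in $(s,\omega)$ with uniform remainder control, since you differentiate along the slanted curve $s=s_p(\omega)$ — the paper sidesteps exactly this by differentiating \eqref{Rafomega} along the pencil directly. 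Two smaller points: the coefficient of $L$ is the jump $c-c_0$ across $\partial\Omega_j$, not $a_j$ itself (these differ when the adjacent region is another $\Omega_l$ rather than the zero exterior); and the part-2 non-cancellation you defer is settled in the paper in two lines from \eqref{eq:corner}: under the theorem's hypotheses $N=2$ and $b_1=-b_2\neq 0$, so the jump is proportional to $b_1\bigl(\tan(\alpha_1)-\tan(\alpha_2)\bigr)$, which is nonzero precisely because $\alpha_1\neq\alpha_2+n\pi$ at a genuine corner — it is this angle condition, not transversality, that prevents cancellation (transversality only keeps the one-sided slopes finite and, via Lemma \ref{lem:corner}, the derivative $\partial_s R_af$ bounded).
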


\begin{remark}
We comment that while we have chosen Theorem \ref{thm1} to summarise the results in this section in a concise manner, in fact the lemmas we prove taken together can provide stronger statements and more information than given in Theorem \ref{thm1} concerning the singularities of $R_a f$ when $a$ is multi-bang. In particular Lemmas \ref{lem:cont} and \ref{lem:flat}, and Corollary \ref{cor:0curv} are not reflected in the statement of Theorem \ref{thm1}.
\end{remark}

\noindent We now begin establishing lemmas and a corollary which will lead to the proof of Theorem \ref{thm1}. For the first lemma we look at the regularity of $R_a f$ in the complement of $\mathcal{K}_a^0$, which is actually not directly related to Theorem \ref{thm1}.
. 
\begin{lemma}\label{lem:cont} Suppose that $f \in C_c^1(\mathbb{R}^2)$, $a$ is multi-bang, and $(s^*,\theta^*) \in (\mathcal{K}_a^0)^c$. Then the mapping $s \mapsto \partial_s R_a f(s,\theta^*)$ is continuous at $s^*$.
\end{lemma}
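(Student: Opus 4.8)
The plan is to fix $\theta = \theta^*$ and analyse the one-dimensional integral $R_a f(s,\theta^*) = \int_{-\infty}^\infty f(s\theta^{*\perp} + t\theta^*)\, e^{-A(s,t)}\,\mathrm{d}t$, where I write $A(s,t) := Da(s\theta^{*\perp}+t\theta^*,\theta^*) = \int_t^\infty a(s\theta^{*\perp}+\tau\theta^*)\,\mathrm{d}\tau$ for the attenuation exponent along the forward ray. The first observation is that, although $a$ is only $L^\infty$, the exponent $A$ is \emph{jointly continuous} in $(s,t)$: continuity in $t$ is clear since $\partial_t A = -a(\cdots)$ is bounded, and continuity in $s$ follows from $L^1$-continuity of translation applied to the compactly supported $a$. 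Thus the integrand $f\,e^{-A}$ has no jumps, only kinks, and one expects $\partial_s$ to pass under the integral with no Dirac contribution. The strategy is then to write $\partial_s R_a f$ explicitly and prove it is continuous at $s^*$ by dominated convergence.

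The structural heart of the argument is to exploit the hypothesis $(s^*,\theta^*)\in(\mathcal{K}_a^0)^c$. By definition the line $L(s^*)$ is transversal to every boundary $\partial\Omega_j$ it meets and passes through no corner, so near each intersection the relevant boundary is smooth and crossed transversally. I would apply the implicit function theorem at each of the finitely many crossings to conclude that, for $s$ near $s^*$, the number of crossings is constant, the crossing parameters $t_1(s)<\dots<t_m(s)$ are $C^1$ in $s$, and the value $\alpha_k$ of $a$ on each interval $(t_k(s),t_{k+1}(s))$ is locally constant (no intersection is created or destroyed, so the pattern of which $\Omega_j$, or the exterior, the line traverses is stable). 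Then $A$ is piecewise linear in $t$, with $A(s,t)=\alpha_\ell(t_{\ell+1}(s)-t)+\sum_{k>\ell}\alpha_k(t_{k+1}(s)-t_k(s))$ on $t\in(t_\ell(s),t_{\ell+1}(s))$ (setting $\alpha_m=0$ beyond the support of $a$), whence on each such interval
\[
\partial_s A(s,t) = \sum_{k=\ell+1}^{m} (\alpha_{k-1}-\alpha_k)\,t_k'(s).
\]
This exhibits $\partial_s A(s,\cdot)$ as a step function in $t$ whose jump locations $t_k(s)$ move continuously and whose jump heights $(\alpha_{k-1}-\alpha_k)\,t_k'(s)$ — a jump of $a$ times a transversal crossing velocity — depend continuously on $s$.

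I would then justify differentiating under the integral: since $f\in C_c^1$, the candidate integrand $(\nabla f\cdot\theta^{*\perp})\,e^{-A} - f\,(\partial_s A)\,e^{-A}$ is supported in a fixed bounded $t$-interval, is bounded uniformly for $s$ near $s^*$, and exists off the measure-zero crossing set; the joint continuity of $A$ rules out any contribution from the moving kinks, giving $\partial_s R_a f(s,\theta^*)=\int_{-\infty}^\infty\big[(\nabla f\cdot\theta^{*\perp}) - f\,\partial_s A\big]\,e^{-A}\,\mathrm{d}t$. For continuity at $s^*$, let $s\to s^*$: the factors $\nabla f$ and $f$ are uniformly continuous and $e^{-A(s,t)}\to e^{-A(s^*,t)}$ uniformly by joint continuity of $A$, while $\partial_s A(s,t)\to\partial_s A(s^*,t)$ for every $t$ off the limiting crossing points (the jump heights are eventually constant and the jump locations converge), with a uniform bound. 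Dominated convergence then yields $\partial_s R_a f(s,\theta^*)\to\partial_s R_a f(s^*,\theta^*)$.

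The main obstacle is the implicit-function-theorem step and the accompanying claim that the crossing structure is locally constant: this is exactly where the exclusion of tangencies and corners is essential, since a tangency would make a crossing parameter fail to be $C^1$ (the square-root behaviour seen later for $\mathcal{P}_{a,1}$) or allow crossings to appear and disappear, and a corner would destroy smoothness of the boundary at the crossing. A secondary technical point requiring care is confirming that no Dirac contribution arises when differentiating under the integral; this rests entirely on the joint continuity of $A$, i.e. on the fact that $a$ produces jumps only in $\partial_t A$ and not in $A$ itself.
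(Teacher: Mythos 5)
Your proposal follows essentially the same route as the paper's proof: fix $\theta^*$, use the no-tangency/no-corner hypothesis to obtain finitely many $C^1$ crossing functions $t_k(s)$ (the paper asserts their differentiability directly where you invoke the implicit function theorem), represent the beam transform as a piecewise-linear function of $t$ built from the $t_k(s)$ (the paper's bookkeeping functions $\phi_i$ encode exactly your formula $\partial_s A=\sum_k(\alpha_{k-1}-\alpha_k)t_k'(s)$), differentiate under the integral, and conclude continuity of $\partial_s R_af(\cdot,\theta^*)$ at $s^*$ from continuity of the $t_k$ and $t_k'$. The paper makes this last step explicit by writing $\partial_s R_af(s,\theta^*)=\int\partial_s f\,e^{-Da}\,\mathrm{d}x+\sum_i(c_i-c_{i-1})\,t_i'(s)\int_{-\infty}^{t_i(s)}f\,e^{-Da}\,\mathrm{d}x$ and reading off continuity term by term; your dominated-convergence argument reaches the same conclusion.

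One justification you give is incorrect, although the proof survives without it. You claim that $A(s,t)$ is jointly continuous for \emph{all} $(s,t)$ because of ``$L^1$-continuity of translation applied to the compactly supported $a$''. Translation continuity in $L^1(\mathbb{R}^2)$ controls $\int\!\!\int|a(x+h)-a(x)|\,\mathrm{d}x$, i.e.\ an average over $s$ of the line integrals; it yields convergence of $A(s+h,\cdot)\to A(s,\cdot)$ in $L^1_s$ (hence pointwise along subsequences for a.e.\ $s$), but never pointwise continuity at a prescribed $s$. For a general piecewise-constant $a$ the claim is simply false: take $a=\chi_{[0,1]^2}$ and lines parallel to an edge, where $A(s,t)$ jumps as the line crosses $x=0$ or $x=1$ (note your justification makes no use whatsoever of the hypothesis $(s^*,\theta^*)\in(\mathcal{K}_a^0)^c$, so it proves too much). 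Under the paper's Definition \ref{def:multibang} joint continuity does in fact hold everywhere, but the correct reason is different: the $\Omega_j$ are open and every line meets $\cup_j\partial\Omega_j$ in a finite (hence null) set, so a semicontinuity (Fatou/reverse-Fatou) argument applies; alternatively, near $s^*$ it follows from your own crossing analysis and the piecewise-linear formula. Since the two places you use joint continuity --- ruling out Dirac contributions when differentiating under the integral, and the uniform convergence of $e^{-A(s,\cdot)}$ --- only need it locally near $s^*$, the repair is immediate: derive continuity of $A$ there from the crossing structure rather than from translation continuity.
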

\begin{proof}
	Since $(s^*,\theta^*) \in (\mathcal{K}_a^0)^c$ the line tangent to $\theta^*$ with distance $s^*$ from the origin is neither tangent to one of the boundaries where $a$ jumps, nor passing through a corner of one of these boundaries. W.L.O.G we can rotate the axis so that the line corresponding to $(s^*,\theta^*)$ lies on the $x$-axis and $\theta^*=(1,0)$. Then we have 
\begin{equation}\label{lem1eq1}
R_{a}f(s,\theta^*)=\int_{-\infty}^{\infty}f(x,s)e^{-Da((x,s),\theta^*)}\mathrm{d}x.
\end{equation}
For $x\in\R$, recall that 
	$$Da((x,s),\theta^*)=\int_{x}^{\infty}a(t,s)\mathrm{d}t.$$
By assumption, the line given by $(s,\theta^*)$ only crosses the jumps of $a$ finitely many times and let us label the ordered values of $t$ for which these crossings occur (when the line is parametrised as $t \mapsto (t,s)$) as $\{t_i(s)\}_{i = 1}^N$. Since $(s^*,\theta^*) \in (\mathcal{K}_a^0)^c$ these functions $t_i$ are differentiable in a neighbourhood of $s = 0$. Next we introduce the functions
\begin{equation}\label{lem1phi}
\phi_i(x,s) = \left \{
\begin{array}{ll}
t_i(s) & x < t_i(s)\\
x & x \geq t_i(s).
\end{array}
\right .
\end{equation}
Note that for all $i$, $\phi_i $ is continuous with bounded first derivative in a neighbourhood of $s = 0$ which is also continuous when $x \neq t_i(s)$. Using these functions we have
\begin{equation} \label{lem1eq2}
Da((x,s),\theta^*)=\sum_{i=1}^{N-1}c_{i}(\phi_{i+1}(x,s)-\phi_{i}(x,s)) = \sum_{i=1}^N (c_{i-1}-c_i) \phi_i(x,s)
\end{equation}
where for each $i$, $c_i$ is one of the admissible values or possibly zero (in particular $c_0$ and $c_N$ are always zero). We thus see that $D a ((x,s),\theta^*)$ also has bounded derivatives in a neighbourhood of $s = 0$ that are continuous except when $x = t_i(s)$ for some $i$, and differentiating this with respect to $s$ gives
	$$\partial_{s}Da((x,s),\theta^*)=\sum_{i=1}^{N}(c_{i-1} -c_i)\partial_s \phi_i(x,s).$$ 
Since $f \in C^1(\mathbb{R}^2)$ as well, we can therefore differentiate under the integral sign in \eqref{lem1eq1} to get
\begin{equation}\label{lem1eq3}
\begin{split}
\partial_{s}R_{a}f(s,\theta^*) & =\int_{-\infty}^{\infty}\left(\partial_{s}f(x,s)-\partial_{s}Da((x,s),\theta^*)f(x,s)\right)e^{-Da((x,s),\theta^*)}\mathrm{d}x\\
& = \int_{-\infty}^{\infty}\partial_{s}f(x,s)e^{-Da((x,s),\theta^*)}\mathrm{d}x + \sum_{i=1}^{N} (c_i - c_{i-1}) t_i'(s) \int_{-\infty}^{t_i(s)} f(x,s) e^{-Da((x,s),\theta^*)}\mathrm{d}x.
\end{split}
\end{equation}
Since $f \in C^1_c(\mathbb{R}^2)$ and the derivatives $t_i'$ are all continuous, we see that in fact $\partial_s R_af(s,\theta^*)$ is also continuous with respect to $s$ at $s = 0$ which completes the proof.

\end{proof}

\noindent We now begin to consider what can happen to $R_a f$ at lines in $\mathcal{K}_a$. First we consider $\mathcal{K}_{a,2}$.

\begin{lemma}\label{lem:corner}
Suppose that $f \in C_c^1(\mathbb{R}^2)$, $a$ is multi-bang, and $(s^*,\theta^*) \in \mathcal{K}_{a,2}$ passes though exactly one corner and is not tangent to any boundary $\partial \Omega_j$. Then $s \mapsto \partial_s R_a f(s,\theta^*)$ is bounded near $s^*$. 

Suppose additionally that the corner point occurs at $s^* (\theta^*)^\perp + t^* \theta^*$, is a corner for $N$ different components of the regions $\Omega_j$, and the boundaries of these regions make angles $\{\alpha_k\}_{k=1}^N$ with $(\theta^*)^\perp$ where the orientation is chosen so that $\theta^*$ is at a positive angle. Also suppose that the jump in $a$ across the boundary with angle $\alpha_k$ in the direction of increasing angle is $b_k$ (see figure \ref{cornerfig} and caption). Then there is a jump in $\partial_s R_a f(s,\theta^*)$ across $s = s^*$ given by
\begin{equation} \label{eq:corner}
\Big [ \partial_s R_a f(s,\theta^*)\Big ]_{s^*_-}^{s^*_+} = \left ( \sum_{k=1}^N b_k \tan(\alpha_k) \right ) \int_{-\infty}^{t^*} f(s^* (\theta^*)^\perp + t \theta^*) e^{-Da(s^* (\theta^*)^\perp + t \theta^*, \theta^*)} \ \mathrm{d} t.
\end{equation}
\end{lemma}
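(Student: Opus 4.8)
The plan is to follow the computation in the proof of Lemma \ref{lem:cont} as far as it goes, and then to read off the jump from the terms associated with the crossings that collapse onto the corner. First I would normalise the geometry exactly as there: rotate the axes so that the line given by $(s^*,\theta^*)$ becomes the $x$-axis, with $\theta^* = (1,0)$, $(\theta^*)^\perp = (0,1)$ and $s^* = 0$, and the corner located at $(t^*,0)$. Writing the crossings of the line $y=s$ with the jump set of $a$ as $\{t_i(s)\}$ ordered in $x$, the same differentiation under the integral sign that produced \eqref{lem1eq3} gives, for every $s \neq 0$ with $|s|$ small,
\[
\partial_s R_a f(s,\theta^*) = \int_{-\infty}^{\infty}\partial_s f(x,s)\, e^{-Da((x,s),\theta^*)}\,\mathrm{d}x + \sum_i (c_i - c_{i-1})\, t_i'(s)\int_{-\infty}^{t_i(s)} f(x,s)\, e^{-Da((x,s),\theta^*)}\,\mathrm{d}x,
\]
where $c_{i-1},c_i$ are the values of $a$ immediately to the left and right of the crossing $t_i(s)$. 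The only new feature compared with Lemma \ref{lem:cont} is that the crossings coming from the branches meeting at the corner need not depend differentiably on $s$ across $s=0$, and indeed the number of such crossings may differ on the two sides.

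Next I would split the crossings into those which remain a fixed positive distance from $t^*$ and those which collapse to $t^*$ as $s\to 0$. For the former the argument of Lemma \ref{lem:cont} applies verbatim: the corresponding $t_i$ are $C^1$ near $s=0$, so those terms are continuous and contribute equally to the two one-sided limits. For the $N$ boundary branches through the corner, indexed as in the statement by their angles $\alpha_k$ with $(\theta^*)^\perp$, transversality (the line is not tangent to any boundary, so $\alpha_k \neq \pm\pi/2$) yields the local expansion $t_k(s) = t^* + s\tan(\alpha_k) + o(s)$ on whichever side of $s=0$ the crossing exists, and hence $t_k'(s) \to \tan(\alpha_k)$, a finite limit. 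Since $Da((x,s),\theta^*)$ is continuous in $(x,s)$ and $f \in C^1_c$, each integral factor converges, from whichever side, to the common value $I := \int_{-\infty}^{t^*} f(x,0)\, e^{-Da((x,0),\theta^*)}\,\mathrm{d}x$. Because all the $t_i'(s)$ are bounded (finite limits at the corner, continuity away from it) and the remaining integrals are bounded, $\partial_s R_a f(s,\theta^*)$ is bounded near $s^*$, which proves the first assertion.

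For the jump I would first determine which branches are actually crossed on each side. Parametrising branch $k$ as $(t^* + r\sin(\alpha_k),\, r\cos(\alpha_k))$ with $r \ge 0$, the line $y=s$ meets it at $r = s/\cos(\alpha_k)$, so the branch is crossed only for $s>0$ when $\cos(\alpha_k)>0$ and only for $s<0$ when $\cos(\alpha_k)<0$. Writing $d_k$ for the jump of $a$ across branch $k$ taken in the $+\theta^*$ (increasing $x$) direction, the one-sided limits therefore give
\[
\Big[\partial_s R_a f(s,\theta^*)\Big]_{s^*_-}^{s^*_+} = I\left(\sum_{\cos(\alpha_k)>0} d_k \tan(\alpha_k) - \sum_{\cos(\alpha_k)<0} d_k \tan(\alpha_k)\right).
\]
The final step is a careful orientation check relating $d_k$ to the jump $b_k$ measured in the direction of increasing angle about the corner: for $s>0$ moving in the $+\theta^*$ direction across the crossing coincides with increasing angle, so $d_k = b_k$, whereas for $s<0$ the two orientations are reversed, so $d_k = -b_k$. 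Substituting these identities collapses both sums into $\sum_{k=1}^N b_k\tan(\alpha_k)$ and yields \eqref{eq:corner}.

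I expect the main obstacle to be precisely this bookkeeping of orientations and signs, namely confirming that the single relative sign between the two sides is accounted for correctly so that every branch contributes $+b_k\tan(\alpha_k)$, together with justifying that the integral factors converge to the common value $I$ uniformly enough that the one-sided limits may be taken term by term even though the number of crossing terms itself changes across $s=0$.
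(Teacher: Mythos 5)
Your proposal is correct and follows essentially the same route as the paper: differentiate under the integral sign as in Lemma \ref{lem:cont}, treat the two sides $s>0$ and $s<0$ separately since the crossing count may differ, use transversality to bound the derivatives $t_i'$ (giving boundedness of $\partial_s R_a f$), and read off the jump from the crossings collapsing onto the corner. Your explicit computation $t_k'(s)\to\tan(\alpha_k)$ and the orientation bookkeeping relating $d_k$ to $b_k$ on each side is a more detailed account of the final step the paper compresses into ``taking into account the rotation and translation,'' and it comes out with the correct signs.
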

\begin{figure}
\begin{center}
\includegraphics[scale=.7]{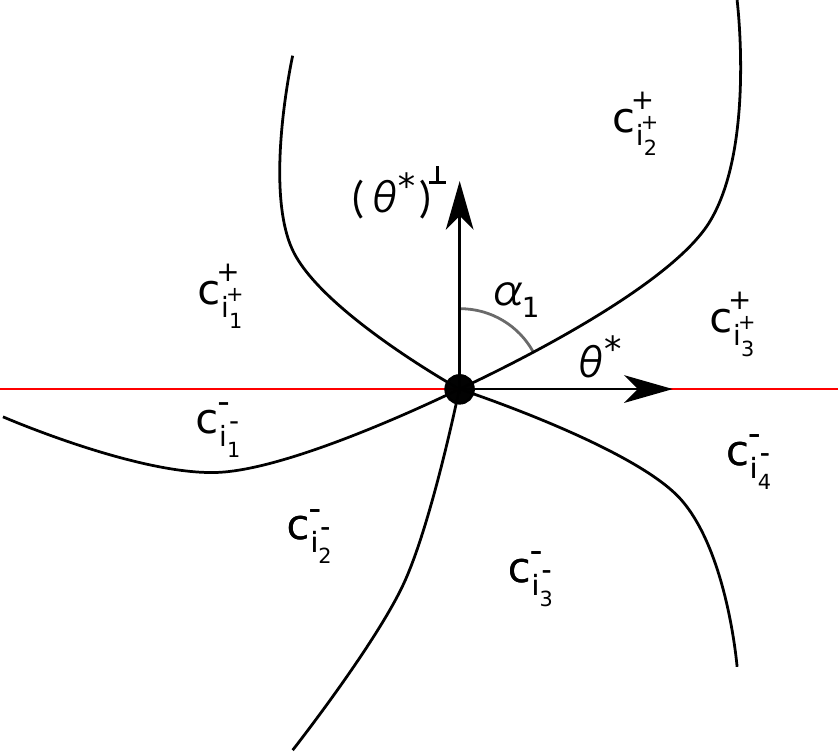}
\end{center}
\caption{This figure illustrates some of the notation used in the statement and proof of Lemma \ref{lem:corner}. The line corresponding to $(s^*,\theta^*)$ is shown in red. Note that the jump $b_1$ across the boundary corresponding to the angle $\alpha_1$ in the example shown in the figure would be $b_1 = c_{i_3^+}^+ - c_{i_2^+}^+$.}
\label{cornerfig}
\end{figure}
\begin{proof}
We use the same set-up and notation as in the proof of Lemma \ref{lem:cont}, although by translating if necessary we also assume W.L.O.G. that the corner occurs at the origin. Unlike in Lemma \ref{lem:cont} there may be a different number of boundary crossings $N$ when $s>0$ and $s<0$, and so we introduce corresponding functions $\{t^\pm_i\}_{i=1}^{N^\pm}$ giving the crossing points where the $t_i^+$ are defined for $s>0$ and the $t_i^-$ are defined for $s<0$. As in Lemma \ref{lem:cont} these $t_i^\pm$ will all have bounded derivatives up to $s = 0$ (this is because the line given by $(s^*,\theta^*)$ is not tangent to any boundary). We also introduce the corresponding $\phi^\pm_i$ defined as in \eqref{lem1phi} but only for $s>0$ and $s<0$ respectively. The formula \eqref{lem1eq2} still holds with $\pm$ added in appropriate places, and we can still see that $Da((x,s),\theta^*)$ is continuous for $s$ close to zero. For the derivative $\partial_s Da((x,s),\theta^*)$ we have for $s \neq 0$, where $\pm$ is the sign of $s$,
\begin{equation}\label{lem3eq1}
\partial_s Da((x,s),\theta^*)=\sum_{i=1}^{N^\pm}(c^\pm_{i-1}- c^\pm_i) \partial_s \phi^\pm_{i}(x,s).
\end{equation}
Since these derivatives are all bounded (but not necessarily continuous at $s = 0$) we still have \eqref{lem1eq3} for $s \neq 0$, and we see that $\partial_s R_a f(s,\theta^*)$ is bounded thus proving the first statement of the theorem. It remains to analyse the jump at $s = 0$.

Let us first consider the jump in $\partial_s Da((x,s),\theta^*)$ across $s = 0$. The only terms contributing to this jump in \eqref{lem3eq1} will be those with $\phi_i^\pm$ where $t_i^\pm(s) \rightarrow 0$ as $s \rightarrow 0^\pm$ since the others correspond to boundaries which do not have corners along $(s^*,\theta^*)$ and the line is not tangent to any of the boundaries. Let us reindex the indices $i$ corresponding to such $t_i$ using a new index $k$ as $\{ i^\pm_{k}\}_{k=1}^{\tilde{N}^\pm}$. Then the jump in $\partial_s Da((x,s),\theta^*)$ is given by
\[
\begin{split}
\Big [ \partial_s Da((x,s),\theta^*)\Big ]_{0^-}^{0^+} & = \lim_{s \rightarrow 0^+} \partial_s Da((x,s),\theta^*) - \lim_{s \rightarrow 0^-} \partial_s Da((x,s),\theta^*)\\
& = \left [ \sum_{k=1}^{\tilde{N}^+} (c^+_{i^+_k-1} -c^+_{i^+_k}) \partial_s \phi^+_{i_k^+}(x,0^+)\right ] - \left [ \sum_{k=1}^{\tilde{N}^-} (c^-_{i^-_k-1} -c^-_{i^-_k}) \partial_s \phi^-_{i_k^-}(x,0^-)\right ].
\end{split}
\]
Using \eqref{lem1eq3} we find that the jump of $\partial_s R_a f(s,\theta^*)$ across $s = 0$ will be
\[
\begin{split}
\Big[  \partial_s R_a f(s,\theta^*) \Big ]^{0^+}_{0^-} & = \lim_{s \rightarrow 0^+} \partial_s R_a f(s,\theta^*)  - \lim_{s \rightarrow 0^-} \partial_s R_a f(s,\theta^*)  \\
& = - \left (\left [ \sum_{k=1}^{\tilde{N}^+} (c^+_{i^+_k-1} -c^+_{i^+_k}) \partial_s t^+_{i_k^+}(0^+) \right ] - \left [ \sum_{k=1}^{\tilde{N}^-} (c^-_{i^-_k-1} -c^-_{i^-_k})  \partial_s t^-_{i_k^-}(0^-)\right ] \right )\\
& \hskip1cm \times \int_{-\infty}^0 f(x,0) e^{-Da((x,0),\theta^*)} \ \mathrm{d} x.
\end{split}
\]
Taking into account the rotation and translation used at the beginning we see that this corresponds with \eqref{eq:corner} and so completes the proof.
\end{proof}

\noindent Next we consider $\mathcal{K}_{a,1}$ which requires a bit more work.

% We now show that you can recover the jump at a point on the boundary with non-zero curvature. Suppose that our smooth boundary is described by the coordinates $(x,\psi(x))$. Note that in order for the curve to have non-zero curvature we require the second derivative to be non-vanishing at $x=0$. This means that $\psi(x)=x^{2}g(x)$ for some $g(x)$ satisfying $g(0)=\frac{\kappa}{2}$. If we rewrite $y=x^{2}g(x)$ as $\phi^{\pm}(y)=\pm \frac{y^{\frac{1}{2}}}{g(x)^{\frac{1}{2}}}$ then take the derivative we find
%	
%	$$\lim_{y\rightarrow0^{+}}y^{\frac{1}{2}}\partial_{y}\phi^{\pm}(y)=\pm\frac{1}{2\sqrt{\kappa}}.$$
%	With this we can prove the following

\begin{lemma}\label{curveint}
Suppose that $f \in C_c^1(\mathbb{R}^2)$, $a$ is multi-bang, and that $(s^*,\theta^*) \in \mathcal{K}_{a,1}$ is such that the line corresponding to $(s^*,\theta^*)$ is only tangent to a boundary $\partial \Omega_j$ at one point given by $s^* (\theta^*)^\perp + t^* \theta^*$ which is not also a corner. Furthermore, suppose that $a = c$ on the convex side of the point of tangency, $a = c_0$ on the concave side, the curvature of $\partial \Omega_j$ at the point of tangency is $\kappa > 0$, and $\theta_\perp^* \in \mathbb{S}^1$ is orthogonal to $\theta^*$ and pointing into the convex side. Then
\begin{equation}\label{eq:tangent}
\begin{split}
\lim_{s\rightarrow (s^*)^{\pm}}|s-s^*|^{\frac{1}{2}}\partial_{s}R(s ,\theta^*)= \pm\frac{c_0-c}{\sqrt{\kappa/2}} \int_{-\infty}^{t^*}f(s^* (\theta^*)^\perp + t \theta^*))e^{-Da(s^* (\theta^*)^\perp + t \theta^*,\theta^*)}\ \mathrm{d}t
\end{split}
\end{equation}
where $\pm$ is the sign of $(\theta^*)^\perp \cdot \theta^*_\perp$.
\end{lemma}

\begin{proof}
After possibly rotating as in the proof of Lemma \ref{lem:cont} and reflecting across the $x$-axis we can assume that $s^* = 0$, $\theta^* = (1,0)$ and $\theta_\perp^* = (0,1)$. We further assume W.L.O.G.  by translating if necessary that the single point of tangency is the origin. This means that locally near the origin the boundary $\partial \Omega_j$ will be given as a graph in the form
\begin{equation}\label{boundary}
y = x^2 g(x)
\end{equation}
where $g$ is a strictly positive function and $g(0) = \frac{\kappa}{2}$.

For sufficiently small $s>0$ we then follow the same reasoning as in the proof of Lemma \ref{lem:cont} and so obtain (for $s>0$) the equation \eqref{lem1eq2}. The difference from Lemma \ref{lem:cont} is that in the current case the derivatives of the $t_i$'s corresponding to the point of tangency will blow up as $s \rightarrow 0^+$. There will be two such $t_i$'s which both go to $0$ as $s\rightarrow 0^+$, one positive and one negative which we label respectively as $t_\pm$, and differentiating \eqref{boundary} we can show that
\begin{equation}\label{lem2eq1}
\lim_{s\rightarrow 0^+} s^{1/2} \partial_s t_\pm(s) = \pm \frac{1}{\sqrt{2 \kappa}}.
\end{equation}
Now we have \eqref{lem1eq3} which holds for $s>0$ sufficiently small, and all the terms in this will be bounded as $s \rightarrow 0^+$ except those that involve derivatives of $t_\pm$. Thus when we multiply by $s^{1/2}$ and take the limit $s \rightarrow 0^+$ the only terms that will possibly remain are
\[
\begin{split}
\lim_{s\rightarrow 0^+} s^{1/2} \partial_{s}R_{a}f(s,\theta^*)& = \lim_{s \rightarrow 0^+} \Bigg\{ - s^{1/2}(c-c_0) \partial_s t_+(s) \int_{t_-(s)}^{t_+(s)} f(x,s) e^{-Da((x,s),\theta^*)} \mathrm{d} x\\
& \quad  -s^{1/2} (c - c_0) \left (\partial_s t_+(s) - \partial_s t_-(s)  \right )  \int_{-\infty}^{t_-(s)}  f(x,s) e^{-Da((x,s),\theta^*)} \mathrm{d} x  \Bigg\}.
\end{split}
\]
The term on the right on the first line is zero by \eqref{lem2eq1} and using the fact the integrand is continuous, while using \eqref{lem2eq1} another time we can evaluate the term on the second line to get
\[
\lim_{s\rightarrow 0^+} s^{1/2} \partial_{s}R_{a}f(s,\theta^*) = \frac{c_0-c}{\sqrt{\kappa/2}} \int_{-\infty}^{0}  f(x,0) e^{-Da((x,0),\theta^*)} \mathrm{d} x.
\]
Taking into account the rotation, translation and reflection at the beginning of the proof, this corresponds with \eqref{eq:tangent} and so completes the proof.
		\end{proof}
%		On its own the above result does not allow us to determine $c$, furthermore unless $a$ is symmetric about the $x=0$ axis repeating the result with $d(s,(-1,0))$ does not gives us a method to recover $c$. In order to proceed we apply an important result from integral geometry \cite[Theorem 3.1]{Bukgeim}.
\begin{remark}
Note that Lemma \ref{curveint} precisely characterises the leading order singularity of the derivative $\partial_s R_a f$ at $(s^*,\theta^*)$. It is possible to obtain a similar formula and characterisation for some smooth parts of the boundary where the curvature is zero with some higher order derivative which does not vanish at $x=0$. In this case we would use $y=x^{2n}g(x)$ in place of \eqref{boundary} where $n\geq 2$ and $g(0)\neq0$. The order of the singularity in $\partial_s R_af$ as $s\rightarrow (s^*)^{\pm}$ is then $1-\frac{1}{2n}$ rather than $1/2$. 
\end{remark}

\begin{remark}
It is possible to combine the methods of proof of the previous lemmas to characterise the singularities at $(s^*,\theta^*)$ corresponding to lines both tangent to the boundaries at multiple places and/or passing through through multiple corners, but to simplify the statements we have not done this explicitly.
\end{remark}
		
\noindent At this point we note that these first three lemmas already show how we can determine some information about multi-bang $a$ from $R_af$. First of all, Lemma \ref{lem:cont} shows that if $R_af(s,\theta)$ is not continuous in $s$ at a point $(s^*,\theta^*)$, then the corresponding line must either be tangent to or passing through a corner of the boundary of one of the regions $\Omega_j$. If $\partial_s R_a f(s,\theta)$ is bounded near $(s^*,\theta^*)$, but has a jump in $s$ at this point, then the line must be passing through a corner from Lemma \ref{lem:corner}. If $\partial_s R_a f(s,\theta^*)$ blows up as $s \rightarrow s^*$, then the line $(s^*,\theta^*)$ must be tangent to one of the boundaries by Lemma \ref{curveint}. This already gives most of the information required to prove Theorem \ref{thm1} except for the part about the derivative with respect to $\omega$. For this we include one additional lemma studying the derivative with respect to variation in the angle $\omega$ rather than $s$ as in the previous lemmas.

\begin{lemma}\label{lem:omega}
Assume the same hypotheses as in Lemma \ref{curveint}. Additionally suppose that\\ $\theta^* = (\cos(\omega^*),\sin(\omega^*))$ and $x^*$ is a point on the line corresponding to $(s^*,\theta^*)$. Then
\begin{equation}\label{eq:omega}
\begin{split}
&\lim_{\omega\rightarrow (\omega^*)^{s_1}}|\sin(\omega-\omega^*)|^{1/2} \partial_{\omega}R(x^*\cdot \theta^\perp,\theta)=\\
& \hskip2cm s_2(c_0-c) \sqrt{\frac{2|x^* \cdot \theta^* -t^*|}{\kappa}} \int_{-\infty}^{t^*}f(s^* (\theta^*)^\perp + t \theta^*))e^{-Da(s^* (\theta^*)^\perp + t \theta^*,\theta^*)}\ \mathrm{d}t
\end{split}
\end{equation}
where $s_1$ is the sign of $(t^* - x^* \cdot \theta^*) (\theta^*)^\perp \cdot \theta^*_\perp$, and $s_2$ is the sign of $(\theta^*)^\perp \cdot \theta^*_\perp$. In the case that $t^* - x^* \cdot \theta^* = 0$, the equation \eqref{eq:omega} still holds with $s_1$ removed (note the right hand side is zero in that case).
\end{lemma}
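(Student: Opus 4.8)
The plan is to imitate the proof of Lemma~\ref{curveint}, but to differentiate the transform along the one-parameter family of lines that all pass through the fixed point $x^*$ and rotate about it, rather than along the family of parallel lines used for the $s$-derivative. As in Lemma~\ref{curveint} I first rotate, translate, and if necessary reflect so that $\theta^* = (1,0)$, $(\theta^*)^\perp = (0,1)$, the point of tangency is the origin, and near the origin $\partial\Omega_j$ is the graph $y = x^2 g(x)$ with $g(0) = \kappa/2 > 0$ as in \eqref{boundary}. I will keep track of the fact that a reflection across the $x$-axis reverses the orientation of the angular variable, sending $\omega \mapsto -\omega$ and hence $\partial_\omega \mapsto -\partial_\omega$; this is one of the places the signs $s_1$ and $s_2$ enter. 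In these coordinates $x^*$ lies at $(x_0,0)$ with $x_0 = x^*\cdot\theta^* - t^*$, and the line through $x^*$ at angle $\omega$ is $y = (x-x_0)\tan(\omega-\omega^*)$, so that $H(\omega) := R_a f(x^*\cdot\theta^\perp,\theta)$ is the integral of $f e^{-Da}$ over this line.

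The first step is to locate the crossings of this line with the tangent boundary. Writing the local boundary as $y = \tfrac{\kappa}{2}x^2$ to leading order and intersecting with the line gives the quadratic $\tfrac{\kappa}{2}x^2 - x\tan(\omega-\omega^*) + x_0\tan(\omega-\omega^*) = 0$, whose discriminant is non-negative precisely on the side of $\omega^*$ where $\tan(\omega-\omega^*)$ and $x_0$ have opposite signs. This is exactly the side $\omega \to (\omega^*)^{s_1}$ singled out in the statement, and it is the side on which the rotating line genuinely crosses $\partial\Omega_j$ near the tangency (two crossings $x_\pm(\omega)$) rather than missing it. Exactly as in Lemma~\ref{lem:cont} every other boundary crossing depends smoothly on $\omega$ with bounded derivative, so the only terms in the $\omega$-analogue of \eqref{lem1eq3} that can blow up are those coming from $x_\pm$.

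The heart of the argument is the analogue of \eqref{lem2eq1}: solving the quadratic and differentiating implicitly, I expect
\[
\lim_{\omega\to(\omega^*)^{s_1}} |\sin(\omega-\omega^*)|^{1/2}\,\partial_\omega x_\pm(\omega) = \mp \sqrt{\frac{|x_0|}{2\kappa}},
\]
so that the two crossings approach the tangency at the square-root rate, but — and this is the crucial difference from Lemma~\ref{curveint} — they merge as $\omega \to \omega^*$ rather than separate, which is what flips the sign relative to the $s$-derivative. Substituting into the $\omega$-analogue of the singular sum in \eqref{lem1eq3} and grouping the two tangent terms as $(\partial_\omega x_+ - \partial_\omega x_-)\int_{-\infty}^{x_-} + \partial_\omega x_+\int_{x_-}^{x_+}$, the second integral is over a shrinking interval and drops out after multiplying by $|\sin(\omega-\omega^*)|^{1/2}$, while the first contributes $(c_0-c)\bigl(\lim |\sin(\omega-\omega^*)|^{1/2}(\partial_\omega x_+ - \partial_\omega x_-)\bigr)\int_{-\infty}^{t^*} f e^{-Da}$. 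Undoing the rotation, translation and reflection then yields \eqref{eq:omega}, the factor $\sqrt{2|x^*\cdot\theta^* - t^*|/\kappa}$ coming from $2\sqrt{|x_0|/(2\kappa)}$, with the degenerate case $x_0 = 0$ giving a vanishing right-hand side and thereby explaining why $s_1$ can be dropped there.

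The main obstacle is entirely in the bookkeeping of signs and orientations: establishing which one-sided limit carries the singularity (the factor $s_1$), correctly propagating the orientation reversal caused by any reflection used in the normalization (the interplay with $s_2$), and — most subtly — accounting for the fact that the crossings merge here whereas they separate in Lemma~\ref{curveint}, so that the relative sign between \eqref{eq:omega} and \eqref{eq:tangent} is not the one a naive transcription would produce. I would pin the signs down by checking the cases $x_0>0$ and $x_0<0$ separately against the known $s$-derivative result via the chain rule $\frac{d}{d\omega}R_a f(x^*\cdot\theta^\perp,\theta) = s'(\omega)\,\partial_s R_a f + \partial_\omega R_a f$, using $s(\omega) = x^*\cdot\theta^\perp$ with $s'(\omega^*) = -x^*\cdot\theta^*$ and the observation that the fixed-$s$ angular derivative is less singular at the tangency (its singular coefficient is proportional to a quantity vanishing at $\omega^*$) and so does not survive in the weighted limit.
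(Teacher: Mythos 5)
Your proposal follows essentially the same route as the paper's proof: normalize so that the tangency is at the origin with $\theta^* = (1,0)$ and $x^* = (\ell,0)$, show that the two boundary crossings merging at the tangency satisfy a limit of the form $\lim |\sin(\omega-\omega^*)|^{1/2}\,\partial_\omega t_\pm = \pm\sqrt{|\ell|/(2\kappa)}$ (the paper derives this, equation \eqref{lem4eq1}, from the geometric relations \eqref{eq:ell0} rather than by solving your explicit quadratic, and records the opposite sign convention to yours --- the residual $\mathrm{sgn}(\ell)$ ambiguity here is precisely the bookkeeping you flag, and must be resolved in the case analysis you propose), treat $\ell = 0$ separately, and then extract the only unbounded terms from the differentiated integral \eqref{domegaRaf}. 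Your discriminant argument identifying the one-sided limit $s_1$ and the chain-rule cross-check against Lemma \ref{curveint} are sensible additions, but the substance of the argument is the same as the paper's.
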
	

\begin{proof}
As in the previous lemmas by rotating, translating and possibly reflecting about the x-axis we assume W.L.O.G. that $\theta^* = (1,0)$, $\theta^*_\perp = (0,1)$ and the point of tangency is at the origin (i.e. $t^* = 0$). We also assume that $x^* = (\ell,0)$ and for the moment consider only the case $\ell \neq 0$. Note that the line corresponding to $(x^* \cdot \theta^\perp,\theta)$ is precisely the line through $x^*$ tangent to $\theta$, and we will change the parametrisation of this line in the integral definition of the AtRT so that $t = 0$ always corresponds with $x^*$. After doing all of this we have
\begin{equation}\label{Rafomega}
R_a f(x^* \cdot \theta^\perp,\theta) = \int_{-\infty}^\infty f(x^* + t \theta) e^{-Da(x^* + t \theta,\theta)} \ \mathrm{d} t.
\end{equation}
Now we use the same notation as in the previous lemmas and label the ordered values of $t$ along the line $t \mapsto x^* + t \theta$, for $\mathrm{sgn}(\ell) \omega > 0$ and $|\omega|$ sufficiently small, at which the line intersects one of the boundaries $\partial \Omega_j$ as $\{t_i(\omega)\}_{i=1}^N$. Two of the $t_i$ will correspond to the point of tangency and these will satisfy $t_i(\omega) \rightarrow -\ell$ as $\omega \rightarrow 0^{-\mathrm{sgn}(\ell)}$. Combining \eqref{boundary} with the geometric relations
\begin{equation}\label{eq:ell0}
\cos(\omega) = \frac{x-\ell}{t_\pm}, \quad \sin(\omega) = \frac{y}{t_\pm}, \quad \tan(\omega)= \frac{y}{x-\ell}
\end{equation}
we can show by taking derivatives with respect to $\omega$, and some computation, that
\begin{equation}\label{lem4eq1}
\lim_{\omega \rightarrow 0^{-\mathrm{sgn}(\ell)}} |\sin(\omega)|^{1/2} \partial_\omega t_\pm(\omega) = \pm \sqrt{\frac{|\ell|}{2\kappa}}.
\end{equation}
In the case that $\ell = 0$ we will still have $t_\pm(\omega)$ when $\omega \neq 0$ is sufficiently small corresponding to the two intersections near the tangent point, but $t_{-\mathrm{sgn}(\omega)}(\omega) = 0$ and $\omega\ t_{\mathrm{sgn}(\omega)}(\omega) > 0$ for all $\omega \neq 0$. In this case we can show in a similar manner to the $\ell \neq 0$ case that
\begin{equation}\label{ell0lim}
\lim_{\omega \rightarrow 0} |\sin(\omega)|^{1/2} \partial_\omega t_\pm(\omega)  = 0.
\end{equation}
We next define functions $\phi_i$ in a similar way to before (compare with \eqref{lem1phi}) as
\begin{equation}\label{lem4phi}
\phi_i(t,\omega) = \left \{
\begin{array}{ll}
t_i(\omega) & t < t_i(\omega)\\
t & t \geq t_i(\omega),
\end{array}
\right .
\end{equation}
for $-\mathrm{sgn}(\ell) \omega > 0$. As before
\[
Da(x^*+t\theta,\theta) = \sum_{i=1}^{N} (c_{i-1}-c_i) \phi_i(t,\omega)
\]
also for $-\mathrm{sgn}(\ell) \omega > 0$. In the case $\ell = 0$ we still have versions of the previous formula for $\omega \neq 0$, but it will change depending on the sign of $\omega$. We will also write $\phi_\pm$ for those $\phi_\pm$ corresponding to $t_\pm$.

Now let us take the derivative of \eqref{Rafomega} in the case when  $-\mathrm{sgn}(\ell) \omega > 0$ if $\ell \neq 0$ or $\omega \neq 0$ if $\ell = 0$. We then have
\begin{equation}\label{domegaRaf}
\partial_\omega R_a f(x^* \cdot \theta^\perp,\theta) = \int_{-\infty}^\infty \Big ( \partial _\omega f(x^* + t \theta) - \partial_\omega Da(x^* + t\theta, \theta) f(x^* + t \theta) \Big ) e^{-Da(x^* + t \theta,\theta)} \ \mathrm{d} t.
\end{equation}
First consider the case $\ell = 0$. In this case when we multiply by $|\sin(\omega)^{1/2}|$ and take the limit as $\omega \rightarrow 0$, using \eqref{ell0lim} we see that the limit is zero. Since $\ell = x^* \cdot \theta^* - t^*$, this proves the result when $\ell = 0$. Now consider when $\ell \neq 0$. In this case we multiply by $|\sin(\omega)|^{1/2}$ and take the limit as $\omega \rightarrow 0^{-\mathrm{sgn}(\ell)}$. The only terms that are not bounded in \eqref{domegaRaf} for $\omega$ close to zero are those that involve derivatives of $\phi_\pm$. We therefore have
\[
\begin{split}
\lim_{\omega \rightarrow 0^{-\mathrm{sgn}(\ell)} }|\sin(\omega)|^{1/2} \partial_\omega R_a f(x^* \cdot \theta^\perp,\theta)& = \\
&\hskip-3cm (c-c_0) \lim_{\omega \rightarrow 0^{-\mathrm{sgn}(\ell)} } \int_{-\infty}^{t_-(\omega)} |\sin(\omega)|^{1/2} \partial_\omega t_-(\omega) f(x^* + t \theta) e^{-Da(x^* + t \theta,\theta)} \ \mathrm{d} t\\
&\hskip-2cm +(c_0-c)\lim_{\omega \rightarrow 0^{-\mathrm{sgn}(\ell)} } \int_{-\infty}^{t_+(\omega)} |\sin(\omega)|^{1/2} \partial_\omega t_+(\omega) f(x^* + t \theta) e^{-Da(x^* + t \theta,\theta)} \ \mathrm{d} t.
\end{split}
\]
Applying \eqref{lem4eq1} to this we finally obtain
\[
\lim_{\omega \rightarrow 0^{-\mathrm{sgn}(\ell)} }|\sin(\omega)|^{1/2} \partial_\omega R_a f(x^* \cdot \theta^\perp,\theta) = (c_0 - c) \sqrt{\frac{2 |\ell|}{\kappa}} \int_{-\infty}^{-\ell}  f(x^* + t \theta^*) e^{-Da(x^* + t \theta^*,\theta^*)} \ \mathrm{d} t
\]
Taking into account the translations, rotation and reflection from the beginning of the proof this formula agrees with \eqref{eq:omega}, and so completes the proof.
\end{proof}

Before giving the proof of Theorem \ref{thm1} we record a corollary of the proof of Lemma \ref{lem:omega} which looks at one case in which at the point of tangency of a line to the boundary of an $\Omega_j$, the curvature of the boundary is zero. This corollary will be useful for the proof of Theorem \ref{thm2} later.

\begin{corollary}\label{cor:0curv}
Assume the same hypotheses as in Lemma \ref{lem:omega} including the assumption that there is a convex and concave side of the boundary near the point of tangency, but say the curvature is $\kappa = 0$ at the point of tangency. If $x^* \cdot \theta^* - t^* \neq 0$ and the ray $\{x^* + t \theta^* \ : \ t < t^*\}$ intersects the set $\{f >0\}$, then limits in \eqref{eq:omega} and \eqref{eq:tangent} are one of $\pm \infty$.
\end{corollary}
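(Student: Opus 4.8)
The plan is to re-run the proofs of Lemmas \ref{curveint} and \ref{lem:omega} essentially verbatim, tracking the single place where the curvature $\kappa$ entered the computation and re-examining it when $\kappa = 0$. In both lemmas everything was reduced, after rotating/translating/reflecting so that $\theta^* = (1,0)$, $\theta^*_\perp = (0,1)$ and the tangency is at the origin, to the behaviour of the two intersection parameters $t_\pm$ near the tangent point, governed by \eqref{lem2eq1} and \eqref{lem4eq1}. Writing the boundary locally as a graph $y = G(x)$ with $G(0) = G'(0) = 0$, the hypothesis that the curvature is zero is exactly $G''(0) = 0$, while the hypothesis that the boundary has a genuine convex and concave side means $G$ does not change sign near $0$ (W.L.O.G. $G \geq 0$). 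The two intersections of the line at height $s$ with the boundary satisfy $G(t_\pm(s)) = s$, so differentiating gives $s^{1/2}\partial_s t_\pm = \pm\, G(t_\pm)^{1/2}/|G'(t_\pm)|$; the whole statement therefore reduces to showing that
\[
\frac{\sqrt{G(t)}}{|G'(t)|} \longrightarrow +\infty \qquad\text{as } t \to 0,
\]
rather than tending to the finite value $1/\sqrt{2\kappa}$ seen when $\kappa > 0$.

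Once this divergence is in hand the conclusion follows as before. Feeding $s^{1/2}\partial_s t_\pm \to \pm\infty$ into the expression for $s^{1/2}\partial_s R_a f$ obtained in the proof of Lemma \ref{curveint}, the dominant contribution is the term $-s^{1/2}(c-c_0)(\partial_s t_+ - \partial_s t_-)\int_{-\infty}^{t_-} f e^{-Da}\,\mathrm{d}x$, whose integral converges to the fixed number $I = \int_{-\infty}^{t^*} f e^{-Da}\,\mathrm{d}t$ while its prefactor blows up; the remaining term is of strictly lower order and cannot cancel it. Since $f \geq 0$ and the backward ray meets $\{f>0\}$ we have $I > 0$, so there is no $0\cdot\infty$ ambiguity and the limit is genuinely infinite, with definite sign $\mathrm{sgn}((\theta^*)^\perp\cdot\theta^*_\perp)\,\mathrm{sgn}(c_0 - c)$ (well defined because the jump $c_0 - c$ is nonzero). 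The argument for \eqref{eq:omega} is identical, using the analogous divergence of $|\sin(\omega)|^{1/2}\partial_\omega t_\pm$ coming from \eqref{lem4eq1} together with the hypothesis $\ell = x^*\cdot\theta^* - t^* \neq 0$, which is precisely what keeps the right-hand side from collapsing to zero.

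The heart of the matter, and the step I expect to be the main obstacle, is the divergence $\sqrt{G}/|G'| \to \infty$. In the finite-type case singled out in the Remark after Lemma \ref{curveint}, where $G(x) = x^{2n}g(x)$ with $g(0) \neq 0$ and $n \geq 2$, this is immediate, since $\sqrt{G(t)}/|G'(t)| \sim c\,|t|^{1-n} \to \infty$, and this already covers the situation one needs. For a general smooth boundary the difficulty is that $G$ may be flat to infinite order at $0$, and the crude one-sided estimates coming from convexity are too lossy to control $\sqrt{G}/|G'|$ directly (for instance for $G = e^{-1/t}$ the natural chord bounds overshoot badly even though the ratio does tend to infinity). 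The clean route I would pursue is to set $P = \sqrt{G}$ and note that $\kappa = 0$ is equivalent to $P'(0^+) = \lim_{t\to0}P(t)/t = 0$, whereas the quantity to be controlled is $|G'|/\sqrt{G} = 2P'$; when $P$ is convex, a convex function whose right derivative at $0$ vanishes automatically satisfies $P'(t)\to 0$, which closes the argument. The remaining task, which is where the real work lies, is to pin down the regularity hypothesis on $\partial\Omega_j$ (available in particular in the nested convex setting of Theorem \ref{thm2}) under which this convexity, or more directly $P'\to 0$, can be guaranteed.
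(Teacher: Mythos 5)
Your outline coincides with the paper's own proof, which is essentially the two lines ``re-run Lemmas \ref{curveint} and \ref{lem:omega} with $g(0)=0$ in \eqref{boundary}, so that the limits \eqref{lem2eq1} and \eqref{lem4eq1} become $\pm\infty$, and note the final integrals do not vanish.'' Your reduction of the whole statement to the divergence $\sqrt{G(t)}/|G'(t)|\to\infty$ (equivalently, with $G(x)=x^2g(x)$, to $\sqrt{g(t)}/(2g(t)+tg'(t))\to\infty$) correctly isolates the one analytic point the paper leaves implicit, and your handling of the rest is sound: the shrinking-interval term is of strictly lower order than the divergent term, and since $f\geq 0$ and the backward ray meets $\{f>0\}$ the limiting integral is strictly positive, so there is no $0\cdot\infty$ cancellation.

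The gap is exactly the step you flag as unresolved, and the repair you sketch does not work as stated. Convexity of $P=\sqrt{G}$ is not available in general: even for $G$ convex with $G(0)=G'(0)=0$ one can have $2GG''-(G')^2<0$ near $0$ (for instance $G(t)=t^2-t^3$ on small $t>0$), so $\sqrt{G}$ may be locally concave, and the corollary is in any case not restricted to a convex setting. The divergence is nevertheless true for any $C^2$ boundary with vanishing curvature at the tangency, with no finite-type or convexity hypothesis, via a localized Glaeser inequality: if $G\geq 0$ on $[-\delta,\delta]$ and $M_\delta=\sup_{[-\delta,\delta]}|G''|$, then for $|t|\leq\delta/2$ the expansion $0\leq G(t+h)\leq G(t)+hG'(t)+\tfrac{1}{2}M_\delta h^2$, optimized over $|h|\leq\delta/2$, yields $|G'(t)|\leq\sqrt{2M_\delta G(t)}+4G(t)/\delta$. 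Dividing by $\sqrt{G(t)}$ and letting $t\to 0$ gives $\limsup_{t\to0}|G'(t)|/\sqrt{G(t)}\leq\sqrt{2M_\delta}$ for every $\delta$, and $M_\delta\to|G''(0)|=0$ precisely because the curvature vanishes at the point of tangency. Hence $\sqrt{G}/|G'|\to\infty$, which closes your argument (and supplies the justification the paper's proof omits); the remark after Lemma \ref{curveint} only covers the finite-order case $y=x^{2n}g(x)$, which, as you say, is the easy instance of this.
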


\begin{proof}
The proof follows the same outline as the proofs of Lemma \ref{curveint} and \ref{lem:omega}, but in \eqref{boundary} we have $g(0) = 0$. Because of this \eqref{lem2eq1} and \eqref{lem4eq1} respectively change to
\[
\lim_{s \rightarrow 0^+} s^{1/2} \partial_s t_\pm(s) = \pm\infty, \quad \mbox{and} \quad \lim_{\omega \rightarrow 0^{-\mathrm{sgn}(\ell)}} |\sin(\omega)|^{1/2} \partial_\omega t_\pm(\omega) = \pm \infty.
\]
Following the proofs through the rest of the way with this change, and using the fact that the integrals appearing at the end do not vanish, proves the corollary.
\end{proof}

\noindent The proof of Theorem \ref{thm1} now follows simply from Lemmas \ref{lem:corner}, \ref{curveint} and \ref{lem:omega} as we now point out.\\

\noindent{{\it Proof of Theorem \ref{thm1}.}}
For the first item in Theorem \ref{thm1} we note that if the hypotheses are satisfied, then by Lemma \ref{curveint} equation \eqref{eq:tangent} holds, and since the ray $\{x + t \theta^* \ | \ t<0\}$ intersects $\{f>0\}$, the integral on the right side of \eqref{eq:tangent} is not zero. Therefore $\partial_s R_a f(s,\theta^*)$ has a singularity of order $1/2$ at $s = s^*$. Similarly Lemma \ref{lem:omega} implies that \eqref{eq:omega} will hold and this limit will only be zero when $x^* = x$. This proves the first part.

The second part follows similarly from Lemma \ref{lem:corner}, although we note the under the given hypotheses $N=2$ in \eqref{lem:corner} and $b_1 = -b_2 \neq 0$. Thus we have a jump in $\partial_s R_a f$ if
\[
\tan(\alpha_1) \neq \tan(\alpha_2).
\]
This will always be true at a corner since there would be equality only if $\alpha_1 = \alpha_2 + n \pi$ for an integer $n$, but that would mean we are not at a corner.
\qed\\

\noindent To finish this section we prove one more lemma concerning what can happen if there is a flat section of a boundary of one of the $\Omega_j$. This will be used in the proof of Theorem \ref{thm2}.

\begin{lemma}\label{lem:flat}
Assume that $a$ is multi-bang and $f \in C^1_c(\mathbb{R}^2)$ is non-negative. Suppose that the line given by $(s^*,\theta^*)$ intersects the boundary of one of the $\Omega_j$ in a line segment of length $\ell$ given by $\{s^* (\theta^*)^\perp + t \theta^* \ | \ t \in [t_- ,t_+]\}$, and that there are no corners for any of the other regions contained in the interior of this line segment. Assume also that the ray $\{s^* (\theta^*)^\perp + t \theta^* \ | \ t < t_+\}$ intersects the set $\{f>0\}$. Then $R_af(s,\theta^*)$ is discontinuous at $s = s^*$.
\end{lemma}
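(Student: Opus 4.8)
The plan is to reduce the claim to comparing the two one-sided limits of $R_a f(s,\theta^*)$ as $s \to (s^*)^\pm$ and showing they differ. The conceptual point, in contrast with Lemmas \ref{lem:corner} and \ref{curveint}, is that an isolated corner or tangency only makes the \emph{derivative} $\partial_s R_a f$ singular while $R_a f$ itself stays continuous, whereas a boundary segment lying \emph{along} the line forces the beam transform $Da$ to jump, and the non-negativity of $f$ prevents this jump from cancelling in the integral.

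First I would normalize coordinates exactly as in Lemmas \ref{lem:cont}--\ref{curveint}: rotate and translate so that $\theta^* = (1,0)$, $s^* = 0$, and the segment is $[t_-,t_+]\times\{0\}$ on the $x$-axis. Writing
\[
R_a f(s,\theta^*) = \int_{-\infty}^{\infty} f(x,s)\, e^{-Da((x,s),\theta^*)}\,\mathrm{d}x, \qquad Da((x,s),\theta^*) = \int_x^\infty a(u,s)\,\mathrm{d}u,
\]
and using that $f$ is continuous and compactly supported while $Da$ is bounded, dominated convergence gives that the one-sided limits $\lim_{s\to 0^\pm} R_a f(s,\theta^*) = \int_{-\infty}^{\infty} f(x,0)\, e^{-Da^\pm(x)}\,\mathrm{d}x$ exist, where $Da^\pm(x) := \lim_{s\to 0^\pm} Da((x,s),\theta^*)$.

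Next I would compute the jump $\Delta(x) := Da^+(x) - Da^-(x)$, and this is the crux. Let $c_{\mathrm{above}}$ and $c_{\mathrm{below}}$ denote the values of $a$ just above and just below the open segment; these are constant along the segment because the hypothesis forbids corners of other regions in its interior, and their difference $\delta := c_{\mathrm{above}} - c_{\mathrm{below}}$ is nonzero since the segment is a genuine jump of the multi-bang function. The beam from $(x,s)$ sweeps the part of the segment to the right of $x$ through $c_{\mathrm{above}}$ as $s\to 0^+$ and through $c_{\mathrm{below}}$ as $s\to 0^-$; every other crossing the line meets (at the two segment endpoints, or at any other corner or tangency) moves \emph{continuously}, exactly as in the proofs of Lemmas \ref{lem:corner} and \ref{curveint}, so it contributes to $\partial_s Da$ but not to $Da$ itself. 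Hence
\[
\Delta(x) = \delta \cdot \big|[t_-,t_+]\cap[x,\infty)\big| = \begin{cases} \delta(t_+-t_-), & x \le t_-,\\ \delta(t_+-x), & t_- < x < t_+,\\ 0, & x \ge t_+. \end{cases}
\]
Finally, using $e^{-Da^+(x)} - e^{-Da^-(x)} = e^{-Da^-(x)}\big(e^{-\Delta(x)}-1\big)$, I would write
\[
\lim_{s\to 0^+} R_a f(s,\theta^*) - \lim_{s\to 0^-} R_a f(s,\theta^*) = \int_{-\infty}^{t_+} f(x,0)\, e^{-Da^-(x)}\big(e^{-\Delta(x)}-1\big)\,\mathrm{d}x.
\]
For every $x < t_+$ we have $\Delta(x)\neq 0$ of fixed sign, so $e^{-\Delta(x)}-1$ is nonzero with the constant sign $-\operatorname{sign}(\delta)$; together with $e^{-Da^-(x)}>0$ and $f(x,0)\ge 0$, the integrand has constant sign on $(-\infty,t_+)$ and is strictly of that sign wherever $f(\cdot,0)>0$. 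The hypothesis that the ray meets $\{f>0\}$, plus continuity of $f$, supplies an interval of positive measure on which this holds, so the integral is strictly nonzero; hence the one-sided limits differ and $R_a f(\cdot,\theta^*)$ is discontinuous at $s^*$.

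The main obstacle is the middle step: verifying rigorously that the two segment endpoints (which are corners or tangent points) and any other feature the line happens to meet contribute nothing to the jump of $Da$, so that $\Delta(x)$ is exactly the clean expression above. This is precisely where the structural insight behind the earlier lemmas does the work, namely that only a boundary arc lying along the line, as opposed to an isolated corner or tangency, makes $Da$—and therefore $R_a f$ itself—discontinuous. The non-negativity of $f$ is then exactly what promotes ``constant-sign integrand'' to ``nonzero integral,'' ruling out the cancellation that could otherwise conceal the discontinuity.
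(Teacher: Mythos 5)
Your proposal is correct and follows essentially the same route as the paper: both identify the one-sided limits of the beam transform $Da((x,s),\theta^*)$ as $s\to 0^{\pm}$, observe that they differ for $x<t_+$ because the line sees the value of $a$ above the segment from one side and the value below it from the other, and then use the non-negativity of $f$ together with the hypothesis on the ray to conclude the resulting constant-sign integrand gives a nonzero jump in $R_af$. The paper's proof is only a three-sentence sketch appealing to the notation of Lemma \ref{lem:corner}, so your explicit formula for $\Delta(x)$ and the sign argument simply supply details the paper leaves implicit.
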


\begin{proof}
We follow the same method as the proofs of Lemma \ref{lem:corner} using the notation $t_i^\pm$ and $\phi^\pm_i$ as before. The difference here is that some of these $t_i^\pm(\omega)$ will converge to the endpoints $t_-$ and $t_+$ of the line segment as $s \rightarrow 0^\pm$. These will lead to a jump in $Da((x,s),\theta^*)$ given by \eqref{lem1eq2} at $s = 0$ when $x<t_+$. This jump will then lead to a jump in $R_af$ if $f$ satisfies the given hypothesis.
\end{proof}

\noindent We next proceed to the statement and proof of Theorem \ref{thm2} as well as some related results.

\subsection{Theorem 2 and related results} \label{sec:thm2}

In this section we state and prove Theorem \ref{thm2}.

\begin{theorem}\label{thm2}
Suppose that $a$ is nicely multi-bang (see Definition \ref{def:nicemb}) and $f \in C^1_c(\mathbb{R}^2)$ is non-negative. Also assume that
\begin{enumerate}
\item for all $x \in \mathcal{P}_{a,1}$ the line tangent to a boundary at $x$ passes through the set $\{f>0\}$, and
\item for all $x \in \mathcal{P}_{a,2}$ there is a line passing through $x$ that also passes through the set $\{f>0\}$.
\end{enumerate}
Then $a$ and $f$ are uniquely determined by $R_af$.
\end{theorem}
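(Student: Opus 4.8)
The plan is to exploit the nested convex structure of a nicely multi-bang $a$ to peel off the regions $C_j$ one at a time, from the outermost $C_1$ inward, using the singularity characterisations from Section \ref{sec:thm1} to recover the boundaries $\partial C_j$ and Lemma \ref{lem:f} to recover $f$ in each successive annular region. Since the $C_j$ are convex and nested, outside $C_1$ we have $a \equiv 0$, and the singularities of $R_af$ detected via Lemmas \ref{lem:corner} and \ref{curveint} (together with the $\omega$-derivative localisation of Lemma \ref{lem:omega}) let us identify the set $\mathcal{P}_a$ of boundary points with nonzero curvature or corners. The first main task is to show that for a convex set, knowledge of $\mathcal{P}_{a,1}$ and $\mathcal{P}_{a,2}$ suffices to reconstruct the whole boundary $\partial C_1$: the only boundary points not directly detected are those lying on flat segments, but by convexity these are straight segments joining two detected points (corners or points of nonzero curvature at the ends of the flat piece), so $\partial C_1$ is determined as the boundary of the convex hull of $\mathcal{P}_a$ restricted to the outermost component. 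Here Lemma \ref{lem:flat} plays an auxiliary role by guaranteeing that flat segments still produce a detectable discontinuity in $R_af$, so we do not mistakenly lose track of them.

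First I would set up an induction on the depth $j$. The inductive hypothesis is that $a$ and $f$ have been determined on the complement of $\overline{C_j}$ (equivalently, we know $a$ outside $C_j$ and $f$ outside $C_j$). The base case $j=1$ is the region outside $C_1$ where $a=0$ is known a priori and $f$ outside the support is also known (it is compactly supported, and in fact outside $C_1$ we can recover $f$ by Lemma \ref{lem:f} applied with the trivial convex region, or directly since $a=0$ makes $R_af$ an ordinary Radon transform there). For the inductive step, assuming $a$ is known outside $C_j$, I would:
\begin{enumerate}
\item Locate $\partial C_j$ by detecting the singularities of $R_af$ associated to lines tangent to or passing through corners of $\partial C_j$, using Theorem \ref{thm1}. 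The hypotheses of Theorem \ref{thm2} guarantee that every point of $\mathcal{P}_{a,1}$ has its tangent line meeting $\{f>0\}$ and every corner has some line through it meeting $\{f>0\}$, so the relevant integrals in \eqref{eq:tangent} and \eqref{eq:corner} are nonzero and the singularities are genuinely present.
\item Reconstruct the full convex curve $\partial C_j$ from the detected points in $\mathcal{P}_a$ by taking the convex hull, filling in flat segments between consecutive detected points as straight lines (justified by convexity and Lemma \ref{lem:flat}).
\item Recover the jump value $c_j$ across $\partial C_j$: at a point of nonzero curvature the coefficient $\frac{c_0-c}{\sqrt{\kappa/2}}$ in \eqref{eq:tangent} involves the known curvature $\kappa$ (now that $\partial C_j$ is reconstructed) and the known integral of $f e^{-Da}$ along the tangent ray (which lies outside $C_j$ where both $a$ and $f$ are already known), so $c$ is determined; equivalently one uses the corner jump formula \eqref{eq:corner}.
\item Apply Lemma \ref{lem:f} to the now-known attenuation outside $C_{j+1}$ to recover $f$ on the annular region $C_j \setminus \overline{C_{j+1}}$, closing the induction.
\end{enumerate}

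The hard part will be Step 1--2, namely the passage from the pointwise singularity detection to a clean global reconstruction of each convex boundary, and in particular the careful handling of the technical hypotheses embedded in Theorem \ref{thm1}: those results were stated for a tangent line tangent at a \emph{single} point and not tangent elsewhere, and for corners meeting no other corner and not tangent to any boundary. In the nested convex setting one must argue that generic tangent lines to $\partial C_j$ satisfy these genericity conditions (a tangent line to a convex curve meets it in at most one point or a flat segment, which is exactly the convexity one wants), but lines tangent simultaneously to two nested $C_j$'s, or passing through aligned corners, form a lower-dimensional exceptional set that does not obstruct recovering the full boundary by continuity. I would also need the key fact, used repeatedly, that the ray on which the integral of $f e^{-Da}$ is evaluated lies entirely in the already-reconstructed exterior region, which is precisely what convexity and nesting provide: the tangent ray $\{x + t\theta^* : t<0\}$ exits $C_j$ immediately on the far side. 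Establishing that the reconstructed exterior data is genuinely sufficient to evaluate these integrals, so that the jump coefficients can be solved for, is where the structural assumption of \emph{nicely} multi-bang does the essential work, and I expect verifying this interplay to be the most delicate part of the argument.
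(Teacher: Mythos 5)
Your proposal matches the paper's proof in all essentials: detect $\mathcal{P}_a$ via the singularity lemmas, recover each convex boundary as the closure of the convex hull of the detected points (the paper isolates this as Lemma \ref{lem:cohull} and proves, via a maximal-flat-segment contradiction, the fact you assert somewhat glibly about flat pieces being bracketed by detected points), read off the jump values from \eqref{eq:tangent} or \eqref{eq:corner} along tangent rays lying in the already-known exterior, and recover $f$ annulus by annulus with Lemma \ref{lem:f}. The only difference is organizational: the paper determines all the sets $C_j$ up front (Lemma \ref{lem:boundary}) before running the induction on the jump values and on $f$, whereas you interleave the boundary recovery into the induction, which changes nothing of substance.
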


\noindent We prove Theorem \ref{thm2} throughout this section in a series of lemmas. The initial step in the proof of Theorem \ref{thm2} is to show that under the given hypotheses we can determine the set of points where $a$ jumps. We will do this now.

\begin{lemma}\label{lem:boundary}
Assume the same hypotheses as Theorem \ref{thm2}. Then we can determine from $R_af$ the sets $C_j$ appearing in Definition \ref{def:nicemb} for $a$.
\end{lemma}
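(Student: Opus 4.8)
The plan is to read off from $R_af$ the geometrically extreme points of the jump curves $\cup_j\partial C_j$ using Theorem \ref{thm1}, and then to reconstruct the nested convex bodies by a layer-by-layer convex hull peeling that exploits the nesting $C_n \Subset \cdots \Subset C_1$. The observation that makes Theorem \ref{thm1} applicable at every boundary point is that for nested convex sets the supporting lines behave very simply: if $x \in \partial C_j$ and $L$ is a supporting line of $C_j$ at $x$, then $x$ lies in the interior of every larger set $C_k$ ($k<j$) and $\overline{C_k}\subset C_j$ for every smaller set, so $L$ cannot be tangent to any other $C_k$ and does not even meet the smaller sets. Hence $L$ is tangent to $\cup_j\partial C_j$ only at $x$ (apart from possibly passing transversally through a corner of a larger set, which by Lemma \ref{lem:corner} contributes only a bounded jump and so cannot mask a leading-order blow-up). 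Thus the non-degeneracy hypotheses of Theorem \ref{thm1} are met at every extreme point of every $C_j$.

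First I would recover $\mathcal{P}_{a,1}$. Hypothesis (1) of Theorem \ref{thm2} guarantees that the tangent line at each $x \in \mathcal{P}_{a,1}$ meets $\{f>0\}$; choosing the orientation of $\theta^*$ for which the ray behind the tangency meets $\{f>0\}$ and using $f \geq 0$ so the integral in \eqref{eq:tangent} cannot cancel, Theorem \ref{thm1}(1) (through Lemmas \ref{curveint} and \ref{lem:omega}) shows that $\partial_s R_af$ has a $1/2$-order singularity exactly along such tangent lines, and that the $\omega$-derivative criterion pins down the tangency point uniquely. Since Lemma \ref{lem:cont} rules out singularities off $\mathcal{K}^0_a$ and Lemma \ref{lem:corner} shows corners give only bounded jumps, the blow-up locus of $\partial_s R_af$ consists precisely of the tangent lines to points of $\mathcal{P}_{a,1}$, so scanning all $(s,\theta)$ recovers $\mathcal{P}_{a,1}$. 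Next I recover $\mathcal{P}_{a,2}$ via Theorem \ref{thm1}(2) (Lemma \ref{lem:corner}): lines through a corner that are not tangent and avoid the other corners produce a bounded jump in $\partial_s R_af$, distinguishable from the smooth and blow-up behaviour elsewhere. Hypothesis (2), together with the openness of $\{f>0\}$ and continuity of $f$, yields an entire open cone of such detectable lines through each corner, so intersecting two of them locates the corner and the full set $\mathcal{P}_{a,2}$ is recovered. Isolated zero-curvature boundary points, which are not in $\mathcal{P}_{a,1}$, are detected as even stronger singularities by Corollary \ref{cor:0curv} and in any case lie in $\overline{\mathcal{P}_{a,1}}$.

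Set $E := \overline{\mathcal{P}_{a,1}\cup\mathcal{P}_{a,2}}$. Because each $C_j$ is convex with piecewise smooth boundary, the extreme points of the compact convex set $\overline{C_j}$ — its strictly convex points, its isolated zero-curvature points, its corners, and the endpoints of any flat edges (each of which is either a corner or a limit of curved points, hence lies in $\mathcal{P}_{a,2}\cup\overline{\mathcal{P}_{a,1}}$) — all lie in $E$, the only omitted boundary points being flat-edge interiors, which are irrelevant since $\overline{C_j}=\mathrm{conv}(\mathrm{ext}\,\overline{C_j})$. I then peel: because $C_k\subset C_1$ for all $k$ we have $E\subset\overline{C_1}$ while $E\supset\mathrm{ext}\,\overline{C_1}$, so $\mathrm{conv}(E)=\overline{C_1}$. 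Since $\overline{C_2}\subset C_1$, the set $E\cap\partial C_1=E\cap\partial(\mathrm{conv}(E))$ sits at positive distance from the remaining points, so removing it leaves exactly $E\cap\bigcup_{k\geq2}\partial C_k$, whose convex hull is $\overline{C_2}$, and so on. After $n$ steps every $\overline{C_j}$, and hence every $C_j$, is determined from $R_af$.

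The main obstacle is the detectability bookkeeping of the second paragraph: one must be sure that at every extreme point of every $C_j$ the relevant ray actually meets $\{f>0\}$ (so the singularity coefficient is nonzero), and that simultaneous singularities along one line — a tangency together with passage through a corner of an outer set, or tangencies to several nested sets at once — do not interfere with the $1/2$-order blow-up used to detect $\mathcal{P}_{a,1}$ or the bounded jump used to detect $\mathcal{P}_{a,2}$. This is exactly the situation anticipated in the remarks following Lemma \ref{curveint}, where it is noted that the methods of Lemmas \ref{lem:corner}, \ref{curveint} and \ref{lem:omega} can be combined; the nesting of the $C_j$ keeps these combinations under control, since as noted above a supporting line of one layer never touches a deeper layer and meets larger layers only transversally.
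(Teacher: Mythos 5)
Your proposal is correct and follows essentially the same route as the paper: detect $\mathcal{P}_{a,1}$ from the order-$1/2$ blow-up of $\partial_s R_a f$ together with the $\omega$-derivative criterion, detect the corners from the bounded jumps of $\partial_s R_a f$, and then recover the nested sets by iterated convex hulls (your inline extreme-point argument is exactly the content of the paper's Lemma \ref{lem:cohull}). The one piece of bookkeeping the paper makes explicit that you gloss over is the use of Lemma \ref{lem:flat} to filter out tangent lines containing a flat boundary segment before applying the point-detection criterion, but this fits straightforwardly into your framework since such lines are distinguished by a discontinuity of $R_a f$ itself.
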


\begin{proof}
We first note that by Lemmas \ref{lem:cont} and \ref{lem:corner} and Corollary \ref{cor:0curv} the set of $(s^*,\theta^*)$ such that $\partial_s R_a f(s,\theta^*)$ for $s$ near $s^*$ is not bounded gives the set of lines which are tangent to some boundary $\partial C_j$, possibly missing some of the lines which intersect a boundary in a line segment. We can get rid of all of the $(s^*,\theta^*)$ corresponding to lines which intersect a boundary $\partial C_j$ in a line segment by looking at the continuity of $R_af(s,\theta^*)$ near $s^*$ and using Lemma \ref{lem:flat}. Thus we can determine the set of $(s^*,\theta^*)$ such that the corresponding lines are tangent to a boundary $\partial C_j$ at some point, and since the $C_j$ are nested convex sets the point of tangency along each such line must be unique. We can determine the point of tangency along each line that is tangent at a point where the curvature of $\partial C_j$ is not zero using Theorem \ref{thm1} or we can determine if at the point of tangency the curvature is zero using Corollary \ref{cor:0curv}. Thus we can identify all points in the boundaries of the $C_j$ at which the curvature of $\partial C_j$ is not zero. Next we will show that we can also find the corners of the boundaries $\partial C_j$.

By Lemma \ref{lem:corner} and the hypotheses, for every corner point $x$ for some $\partial C_j$ there will infinitely many lines passing through $x$ such that for at least $(s^*,\theta^*)$ corresponding to these lines $\partial_s R_a f(s,\theta^*)$ is bounded, but has a jump at $s = s^*$. This allows us to determine the corner points, and combining this with the previous paragraph we see that we can determine the $\mathcal{P}_a$ from $R_af$ under the given hypotheses. We next show that this is sufficient to determine all of the $C_j$.

By Lemma \ref{lem:cohull}, which we will prove next, the closure of the convex hull of $\mathcal{P}_a$ is equal to the closure of $C_1$. Therefore we can determine $C_1$. The rest of the sets $C_j$ can now be determined inductively. Indeed, suppose that we know $C_{l}$ for all  $l<j$. Then by Lemma \ref{lem:cohull} again
\[
\overline{C_j} = \overline{\mathrm{conhull}\left ( \mathcal{P}_a \setminus \bigcup_{l=1}^{j-1}\partial C_l \right )},  
\]
and so we can determine $C_j$. This completes the proof. 
\end{proof}

\noindent The following geometric lemma was needed in the proof of Lemma \ref{lem:boundary}.

\begin{lemma} \label{lem:cohull}
Suppose that $C \subset \mathbb{R}^2$ is closed, convex, bounded and has smooth boundary possibly with corners. Also let $\mathcal{P}$ be the subset of $\partial C$ consisting of points which are either corners of $\partial C$, or where $\partial C$ has nonzero curvature. Then
\[
C = \overline{\mathrm{conhull} \left ( \mathcal{P} \right )}
\]
where $\mathrm{conhull}(\mathcal{P})$ is the convex hull of $\mathcal{P}$.
\end{lemma}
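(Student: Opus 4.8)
The plan is to show the two inclusions $\overline{\mathrm{conhull}(\mathcal{P})} \subseteq C$ and $C \subseteq \overline{\mathrm{conhull}(\mathcal{P})}$ separately. The first is immediate: $\mathcal{P} \subseteq \partial C \subseteq C$, and since $C$ is closed and convex it contains the closure of the convex hull of any of its subsets. So the whole content of the lemma is in the reverse inclusion, and this is where the work lies.

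For $C \subseteq \overline{\mathrm{conhull}(\mathcal{P})}$, since both sides are closed convex sets it suffices to prove that every boundary point of $C$ lies in $\overline{\mathrm{conhull}(\mathcal{P})}$; equivalently (by the supporting hyperplane characterization of closed convex sets) I would show that $C$ and $\overline{\mathrm{conhull}(\mathcal{P})}$ have the same supporting lines. Suppose for contradiction that some supporting line $L$ of $C$ is \emph{not} a supporting line touching $\overline{\mathrm{conhull}(\mathcal{P})}$, i.e. $\overline{\mathrm{conhull}(\mathcal{P})}$ lies strictly on one side of $L$. Then the contact set $F = L \cap \partial C$ contains no point of $\mathcal{P}$, which means $F$ consists entirely of points where $\partial C$ is smooth with \emph{zero} curvature. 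The geometric heart of the argument is that a flat (zero-curvature) piece of a convex boundary is a line segment, and the complement of $\mathcal{P}$ in $\partial C$ is a union of such open segments; I would argue that each maximal zero-curvature arc is a straight segment whose two endpoints are either corners or points of nonzero curvature, hence lie in $\mathcal{P}$.

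The key step, then, is to handle these flat segments: a boundary point $x \in \partial C \setminus \mathcal{P}$ lies in the relative interior of a maximal straight segment $[p,q] \subset \partial C$ whose endpoints $p,q$ are in $\mathcal{P}$ (each endpoint is where curvature first becomes nonzero, or where a corner occurs — otherwise the segment could be extended, contradicting maximality). Since $x \in [p,q] = \mathrm{conhull}(\{p,q\}) \subseteq \mathrm{conhull}(\mathcal{P})$, we get $x \in \overline{\mathrm{conhull}(\mathcal{P})}$. I would need to verify that endpoints of a maximal flat segment genuinely belong to $\mathcal{P}$, which follows because at such an endpoint the boundary either turns (a corner) or curves away (nonzero curvature), and because the finite-intersection and smoothness hypotheses from Definition \ref{def:multibang} rule out pathological accumulations of flat pieces.

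\textbf{The main obstacle} I anticipate is making rigorous the claim that $\partial C \setminus \mathcal{P}$ decomposes into straight segments with endpoints in $\mathcal{P}$ — in particular ruling out a point of $\partial C \setminus \mathcal{P}$ that is a limit of points alternating between nonzero and zero curvature, or a flat arc whose endpoint is neither a corner nor a nonzero-curvature point. For a genuinely smooth convex boundary (with corners) this cannot happen for the segment containing a given $x$: convexity forces the zero-curvature set along the boundary near $x$ to be a segment, and one only has to chase its endpoints. The cleanest route is probably to appeal directly to the supporting-line formulation: show that every extreme point of $C$ lies in $\mathcal{P}$ (extreme points cannot lie in the relative interior of a boundary segment, so they are corners or strictly convex points, hence in $\mathcal{P}$), invoke the Krein–Milman / Minkowski theorem that $C$ is the closed convex hull of its extreme points, and conclude $C = \overline{\mathrm{conhull}(\text{ext}(C))} \subseteq \overline{\mathrm{conhull}(\mathcal{P})}$. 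This bypasses the segment bookkeeping entirely and is the approach I would favor.
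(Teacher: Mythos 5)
Both routes you propose hinge on the same assertion, and that assertion is false as stated: you claim that the endpoint of a maximal flat segment of $\partial C$ (equivalently, that every extreme point of $C$) must be either a corner or a point of nonzero curvature, hence in $\mathcal{P}$. A point of a smooth convex boundary can be extreme, non-cornered, and still have zero curvature: take the arc $y=x^4$ near the origin (closed up into a convex body elsewhere). The origin is an extreme point, lies in the relative interior of no boundary segment, is not a corner, and has curvature $0$, so it is \emph{not} in $\mathcal{P}$; thus $\mathrm{ext}(C)\not\subseteq\mathcal{P}$ and your Krein--Milman/Minkowski step $C=\overline{\mathrm{conhull}(\mathrm{ext}(C))}\subseteq\overline{\mathrm{conhull}(\mathcal{P})}$ does not follow as written. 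The same example, grafted onto the end of a genuine flat segment, defeats the segment-bookkeeping version: the endpoint where the boundary ``curves away'' can do so with vanishing curvature. Your appeal to the hypotheses of Definition \ref{def:multibang} does not help here, since the lemma is a free-standing statement about an arbitrary convex body $C$ and those hypotheses do not exclude isolated zeros of curvature.

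The reason the lemma is nevertheless true is that the conclusion only involves the \emph{closure} of the convex hull, and such exceptional points are limits of points of $\mathcal{P}$. The paper's proof exploits this from the outset: it supposes $x\in\partial C\setminus\overline{\mathrm{conhull}(\mathcal{P})}$ and uses that this set is relatively open in $\partial C$, so an entire boundary neighbourhood of $x$ avoids $\mathcal{P}$; zero curvature on a whole arc with no corners \emph{does} force a straight segment through $x$, and then taking the maximal such segment, one of its endpoints must again lie in $\partial C\setminus\overline{\mathrm{conhull}(\mathcal{P})}$ (else $x$ would be in the closed convex hull), which contradicts maximality since that endpoint would itself sit in the relative interior of a collinear segment. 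Your preferred extreme-point route can be repaired in the same spirit: prove $\mathrm{ext}(C)\subseteq\overline{\mathcal{P}}$ (if an extreme point had a boundary neighbourhood missing $\mathcal{P}$, that neighbourhood would be a segment, contradicting extremality) and use $\overline{\mathrm{conhull}(\overline{\mathcal{P}})}=\overline{\mathrm{conhull}(\mathcal{P})}$ --- but the inclusion into $\mathcal{P}$ itself, which is what you wrote, is the step that fails.
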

\begin{proof}
Since $C$ is closed and convex, and $\mathcal{P} \subset C$, we have $\overline{\mathrm{conhull} \left ( \mathcal{P} \right )} \subset C$. Thus it only remains to show the opposite inclusion. Suppose that $x \in \partial C \setminus \overline{\mathrm{conhull} \left ( \mathcal{P} \right )}$. Then there must be a neighbourhood $U$ of $x$ such that $U \cap \partial C$ does not intersect $\mathcal{P}$. Therefore the curvature of $\partial C$ is zero at all points in $U \cap \partial C$, and since $x$ is also not a corner for $\partial C$, this implies that $U \cap \partial C$ must contain a line segment containing $x$ in its relative interior. There must then be some maximally extended line segment containing $x$ which is contained in $\partial C$. At least one of the end points of this maximal line segment must also be in  $\partial C \setminus \overline{\mathrm{conhull} \left ( \mathcal{P} \right )}$ since otherwise we would have $x \in \overline{\mathrm{conhull} \left ( \mathcal{P} \right )}$ by convexity. However this is a contradiction since by the argument we have already given this endpoint would be in the relative interior of a line segment contained in $\partial C \setminus \overline{\mathrm{conhull} \left ( \mathcal{P} \right )}$. Thus $\partial C \setminus \overline{\mathrm{conhull} \left ( \mathcal{P} \right )} = \emptyset$, which then implies the result.
\end{proof}

\noindent Having proven in Lemma \ref{lem:boundary} that we can determine the sets $C_j$ from $R_a f$ under the hypotheses of Theorem \ref{thm2}, it remains to show that we can recover $f$ and the jumps in $a$ across each of the boundaries. For this we argue by induction starting at the outermost region $C_1$, and continuing inward. First suppose that $a$ and $f$ are known everywhere outside of $C_{j-1}$ for some $j \geq 2$. Then since there must be at least one point $x$ on the boundary $\partial C_{j-1}$ in $\mathcal{P}_a$, we can either use \eqref{eq:corner} or \eqref{eq:tangent} to determine the jump in $a$ across the boundary at $x$. Therefore we can determine $a$ outside of $C_j$. To complete the induction step it then remains to show we can determine $f$ outside of $C_j$. For this we use the following lemma.

\begin{lemma}\label{lem:f}
Suppose that $f \in C_c^1(\mathbb{R}^2)$, and a is nicely multi-bang with sets $\{C_j\}_{j=1}^n$, and let $C_0$ be an open ball centred at the origin that is sufficiently large so that $C_1 \Subset C_0$ and $\mathrm{supp}(f) \Subset C_0$. Then for $j \geq 1$, $f|_{C_{j-1} \setminus C_j}$ is uniquely determined if we know all of
\begin{enumerate}
\item $R_a f$,
\item the sets $C_j$,
\item $a|_{\mathbb{R}^2 \setminus C_{j}}$, and
\item $f|_{\mathbb{R}^2 \setminus C_{j-1}}$.
\end{enumerate}
\end{lemma}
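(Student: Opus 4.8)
The plan is to reduce the statement to a uniqueness property for the attenuated Radon transform on lines that avoid the convex set $C_j$, and then to invoke \cite[Theorem 3.1]{Bukgeim}. Since $f \mapsto R_a f$ is linear in $f$ for the fixed true attenuation $a$, it suffices to prove the uniqueness version: if $g \in C_c^1(\mathbb{R}^2)$ vanishes on $\mathbb{R}^2 \setminus C_{j-1}$ and $R_a g \equiv 0$, then $g$ vanishes on $C_{j-1}\setminus C_j$. Indeed, given two source densities $f_1,f_2$ both consistent with items (1)--(4), their difference $g = f_1 - f_2$ satisfies exactly these hypotheses (item (4) gives $g = 0$ outside $C_{j-1}$, and item (1) together with linearity gives $R_a g \equiv 0$), and the conclusion $g|_{C_{j-1}\setminus C_j} = 0$ is precisely the asserted uniqueness.

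First I would restrict attention to the lines $(s,\theta)$ whose line does not meet $C_j$. On any such line, the weight $e^{-Da(s\theta^{\perp}+t\theta,\theta)}$ appearing in \eqref{start} is the exponential of an integral of $a$ taken along that same line, and since the whole line lies in $\mathbb{R}^2 \setminus C_j$ this weight depends only on $a|_{\mathbb{R}^2\setminus C_j}$, which is known by item (3). Moreover, along such a line the integrand receives no contribution from $C_j$, so $R_a g(s,\theta)$ integrates $g$ only over the exterior of $C_j$, where by item (4) it is supported in the compact set $\overline{C_{j-1}}\setminus C_j$. Writing $g_0$ for the function equal to $g$ on $\mathbb{R}^2\setminus C_j$ and zero on $C_j$ (a compactly supported function, whose precise regularity class I will match to the hypotheses of the cited theorem; note that lines avoiding $C_j$ never cross $\partial C_j$, so the possible jump of $g_0$ there is invisible to the data), the hypothesis $R_a g\equiv 0$ yields that the attenuated Radon transform of $g_0$, computed with the known weight above, vanishes on every line not meeting $C_j$.

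At this point the problem has exactly the form treated in \cite[Theorem 3.1]{Bukgeim}: a compactly supported function supported in the exterior of the convex set $C_j$, whose attenuated Radon transform vanishes on all lines avoiding that set, must itself vanish. Here the convexity of $C_j$ is essential, in the manner of a Helgason-type support theorem. Applying the cited result gives $g_0 = 0$, hence $g = 0$ on all of $\mathbb{R}^2\setminus C_j$ and in particular on the annulus $C_{j-1}\setminus C_j$, which is the claimed uniqueness. It is worth recording how the four hypotheses enter: item (2) identifies $C_j$ and hence the family of admissible (exterior) lines; item (3) makes the weight on those lines explicit and independent of the unknown interior values of $a$; item (4) removes the known contribution of $f$ from outside $C_{j-1}$; and item (1) supplies the vanishing data.

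The main obstacle is the clean matching of this exterior-line data to the hypotheses of \cite[Theorem 3.1]{Bukgeim}. Two points need care: verifying rigorously that, because each admissible line lies entirely outside the convex set $C_j$, the weight is genuinely determined by the known $a|_{\mathbb{R}^2\setminus C_j}$ and is unaffected by the unknown interior attenuation; and handling the regularity of the weight, which is only piecewise smooth as a function of $(s,\theta)$ since $a$ still jumps across the outer boundaries $\partial C_l$ for $l < j$, so that one must check this weight class falls within the scope of the cited uniqueness theorem. Should a direct application be awkward, the same conclusion follows from a support-theorem argument for the attenuated Radon transform using the analytic-microlocal methods of \cite{FSU}, as noted in the introduction.
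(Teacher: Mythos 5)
Your reduction to the uniqueness statement via linearity in $f$ is fine, and you have correctly identified that the argument should hinge on lines avoiding the convex set $C_j$, along which the weight $e^{-Da}$ is determined by the known $a|_{\mathbb{R}^2\setminus C_j}$. However, there is a genuine gap at exactly the point you flag as ``the main obstacle'': you assert that \cite[Theorem 3.1]{Bukgeim} is a Helgason-type support theorem for the attenuated Radon transform with this weight, and that is not what the cited result provides, nor is such a statement available off the shelf. The weight along exterior lines is only piecewise smooth in $(s,\theta)$ (it jumps as lines cross the boundaries $\partial C_l$ for $l<j$), and support/uniqueness theorems for weighted Radon transforms genuinely require analyticity-type conditions on the weight --- for merely smooth weights uniqueness can fail (Boman's counterexample). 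So the step ``apply the cited result to $g_0$'' is not a routine verification but the entire content of the proof, and your proposal does not supply it.

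The paper closes this gap with a construction you do not have. After rotating so that $C_j$ lies in the upper half-plane (the result is obtained one half-plane at a time and then by rotating), it (i) replaces $a$ by a modified attenuation $\tilde a$ that equals $a$ on $C_{j-1}$ but is \emph{constant} on a parabolic neighbourhood of the exterior region, so that along the admissible lines the weight becomes an explicit elementary function; (ii) cuts $f$ off to $\tilde f=\phi f$ supported above a lower parabola, agreeing with $f$ on $C_{j-1}$; (iii) checks, by splitting each line at its two intersection points with $\partial C_{j-1}$ and subtracting the known exterior contributions and the known multiplicative factor $e^{-F}$ relating $Da$ to $D\tilde a$, that $R_{\tilde a}\tilde f$ is computable from items (1)--(4) on all lines contained in $\{y<\epsilon x^2\}$; and (iv) performs the parabolic change of variables $z=\sqrt{\epsilon}\,x$, $w=y-\epsilon x^2+h$, which maps the region between the two parabolas to a strip and the admissible lines to the specific two-parameter family of curves, with an \emph{analytic} weight in $k$ and $z$, to which \cite[Theorem 3.1]{Bukgeim} actually applies. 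Without steps (i) and (iv) --- or a worked-out substitute such as a full analytic-microlocal support theorem in the style of \cite{FSU}, which you mention but do not carry out --- the proposal does not constitute a proof.
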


\begin{proof}
For this proof we will write $(x,y)$ as Cartesian coordinates for points in $\mathbb{R}^2$. By translating and rotating as necessary we assume W.LO.G. that $C_{j}$ is contained in the upper half plane $\{y>0\}$, and show that we can then uniquely determine $f$ restricted to the lower half-plane $\{y<0\}$. By translating to bring $C_{j}$ arbitrarily close to $\{y = 0\}$ and rotating this then shows we can determine $f$ everywhere outside of $C_{j}$ and so will complete the proof. 

Having done the transformations described in the previous paragraph, we also assume that $C_{j-1} \subset \{y > -h\}$. Now choose $\omega > \epsilon > 0$ such that the parabola $\{ y= \epsilon x^2\}$ lies entirely outside of $C_{j}$ and the parabola $\{ y= \omega x^2 - h\}$ lies entirely outside of $C_{j-1}$. It is possible to find such $\omega$ and $\epsilon$ since the $C_j$ are all bounded. Now choose $\phi \in C^\infty(\mathbb{R}^2)$ such that $\phi(x,y) = 1$ on $C_{j-1}$ and $\phi(x,y) = 0$ on the set $\{y < \omega x^2 - h\}$. Then define $\tilde{f} = \phi f$ so that $\tilde{f}$ has support contained in the set $\{y \geq \omega x^2 - h\}$ and is such that $\tilde{f}|_{C_{j-1}} = f|_{C_{j-1}}$. Also, supposing that $a = c$ on $C_{j-1} \setminus C_j$, we set
\[
\tilde{a} = \left \{
\begin{array}{ll}
\tilde{a}(x,y) = a(x,y) & (x,y) \in C_{j-1}\\
\tilde{a}(x,y) = c & (x,y) \in (\mathbb{R}^2 \setminus C_{j-1})\cap \{ y>\epsilon x^2 - h-1\}\\
\tilde{a}(x,y) = 0 & \mbox{otherwise}.
\end{array}
\right .
\]
The setup described in the last few lines is illustrated in figure \ref{parabolasetup}. Our next step is to show that we can determine $R_{\tilde{a}} \tilde{f}(s,\theta)$  if $(s,\theta)$ corresponds to a line contained in the set $\{y< \epsilon x^2\}$ given the hypotheses of the lemma. If this line does not pass through $C_{j-1}$, then there is no problem since we know $\tilde{f}$ and $\tilde{a}$ outside of $C_{j-1}$. Suppose on the other hand that the line does pass through $C_{j-1}$, and let the two points of intersection between the line and $\partial C_{j-1}$ be denoted $t_1<t_2$ (note there will always be two such points by convexity and these can be determined from $C_j$).  We then have
\[
\begin{split}
R_af(s,\theta) & = \int_{-\infty}^{t_1} f(s\theta_\perp + t \theta) e^{-Da(s \theta_\perp + t \theta,\theta)} \ \mathrm{d} t + \int_{t_1}^{t_2} f(s\theta_\perp + t \theta) e^{-Da(s \theta_\perp + t \theta,\theta)} \ \mathrm{d} t \\
& \hskip1cm + \int_{t_2}^{\infty} f(s\theta_\perp + t \theta) e^{-Da(s \theta_\perp + t \theta,\theta)} \ \mathrm{d} t.
\end{split}
\]
The first and third terms on the right side of the last equation only involve $a|_{\mathbb{R}^2 \setminus C_{j}}$ and $f|_{\mathbb{R}^2 \setminus C_{j-1}}$ as well as $t_1$ and $t_2$, and thus are known functions of $(s,\theta)$ under the given hypotheses. We combine these together, and also $R_a f$, into one function $G(s,\theta)$, and so, since also $f |_{C_{j-1}} = \tilde{f} |_{C_{j-1}}$, we have
\[
 \int_{t_1}^{t_2} \tilde{f}(s\theta_\perp + t \theta) e^{-Da(s \theta_\perp + t \theta,\theta)} \ \mathrm{d} t = G(s,\theta)
\]
where $G$ is a function which can be determined from the known information. Next note that for $t \in (t_1,t_2)$, $F = -Da(s \theta_\perp + t \theta,\theta) + D\tilde{a}(s \theta_\perp + t \theta,\theta)$ only depends on $s$ and $\theta$, and can be determined under the hypotheses. Therefore we have
\[
 \int_{t_1}^{t_2} \tilde{f}(s\theta_\perp + t \theta) e^{-D\tilde{a}(s \theta_\perp + t \theta,\theta)} \ \mathrm{d} t = e^{-F(s,\theta)} G(s,\theta).
\]
Finally, we can add back in the integrals with $\tilde{f}$ and $\tilde{a}$ from $-\infty$ to $t_1$ and $t_2$ to $\infty$ since these only involve $\tilde{a}|_{\mathbb{R}^2 \setminus C_{j}}$, and $\tilde{f}|_{\mathbb{R}^2 \setminus C_{j-1}}$ which are assumed to be known. Doing this we see that $R_{\tilde{a}} \tilde{f}$ can be determined given the hypotheses. The problem has now been reduced to determining $\tilde{f} |_{C_{j-1} \cap \{y<0\}}$.

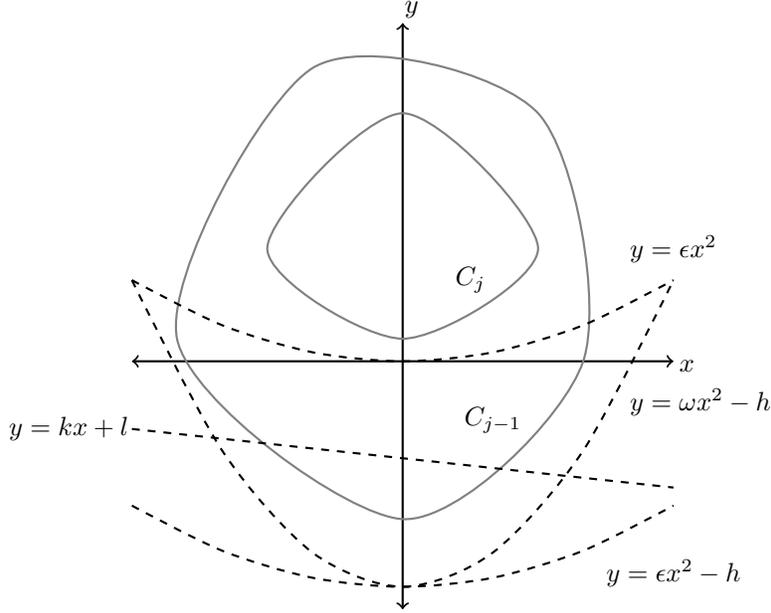
\begin{figure}
\begin{center}
\begin{tikzpicture}[scale=.6]
\draw[black,thick,<->](-1,10.5)--(11,10.5);
\draw[black,thick,<->](5,5)--(5,18);
\node at (11.3,10.4){$x$};
\node at (5.2,18.3){$y$};
\draw [gray,thick] plot [smooth cycle] coordinates{(2,13) (5,11) (8,13) (5,16)};
\draw[gray,thick] plot [smooth cycle] coordinates{(0,11) (5,7)(9,10.5)(8,16)(3,17)};
\draw[black,thick,dashed] plot[smooth] coordinates {(-1,12.3)(1,11.3)(3,10.7)(5,10.5)(7,10.7)(9,11.3)(11,12.3)};
\draw[black,thick,dashed] plot[smooth] coordinates {(-1,7.3)(1,6.3)(3,5.7)(5,5.5)(7,5.7)(9,6.3)(11,7.3)};
\node at (11,5.8){$y=\epsilon x^{2}-h$};
\node at (11,13){$y=\epsilon x^{2}$};
\draw[black,thick,dashed] plot[smooth] coordinates{(-1,12.3)(1,8.52)(3,6.26)(5,5.5)(7,6.26)(9,8.52)(11,12.3)};
\node at (11.6,9.6){$y=\omega x^{2}-h$};
\node at (7,9.2){$C_{j-1}$};
\node at (6.5,12.3){$C_j$};
\draw[thick,black,dashed] plot coordinates{(-1,9)(11,7.7)};
\node at (-2.4,9){$y = kx + l$};
%\filldraw (0.82,8.8) circle (2pt);
%\filldraw (8.61,7.96) circle (2pt);
%\node at (0.6,8.5){$x_-$}; Note: not consistent with definition of x_+ given in text.
%\node at (8.4,8.3){$x_+$};
\end{tikzpicture}
\caption{This illustrates the setup and some of the notation used in the proof of Lemma \ref{lem:f}. Note that we assume $a = c$ in the region $C_{j-1} \setminus C_j$, and $\tilde{a} = c$ in the region above the lowest parabola translated downwards by $1$ and outside of $C_j$. $f$ is assumed to be known outside of $C_{j-1}$, and then $\tilde{f}$ is supported in the region above the middle parabola.}
\label{parabolasetup}
\end{center}
\end{figure}

Our final step is to change variables in order to reduce the problem to the one considered in \cite[Theorem 3.1]{Bukgeim}. For this we consider only $R_{\tilde{a}}\tilde{f}(s,\theta)$ for $(s, \theta)$ corresponding to lines contained in $\{y< \epsilon x^2\}$. We reparametrise such lines using $y = kx + l$ where the slope $k$ and intercept $l$ replace $\theta$ and $s$ respectively. We will also write  $x_+(k,l) = \frac{k + \sqrt{k^2 + 4\epsilon (l+h+1)}}{2\epsilon}$ for the larger value of $x$ at which $\{y = kx + l\}$ intersects $\{ y = \epsilon x^2 - h-1\}$. When we parametrise the lines in this way, the beam transform becomes
\[
D\tilde{a}((x,kx+l),k) = \int_0^\infty \tilde{a}(x+s,k(x+s) + l)\sqrt{1+k^2} \ \mathrm{d} s = c(x_+(k,l) - x) \sqrt{1+k^2}
\]
and so the AtRT becomes
\[
R_{\tilde{a}} \tilde{f}(k,l) = \int_{-\infty}^\infty \tilde{f}(x,kx+l) e^{c (x - x_+(k,l))\sqrt{1+k^2}} \sqrt{1+k^2} \ \mathrm{d} x.
\]
We now introduce the new coordinates $(z,w)$ defined by $z = \sqrt{\epsilon} x$ and $w = y - \epsilon x^2 + h$. With this change, the region $\{\epsilon x^2 > y > \epsilon x^2 - h\}$ becomes the strip $\{h>w> 0\}$, and abusing notation slightly by writing $\tilde{f}$ also for the same function in these coordinates the AtRT becomes
\[
R_{\tilde{a}} \tilde{f}(k,l) = \int_{-\infty}^\infty \tilde{f}\left (z,-\left (z-\frac{k}{2 \sqrt{\epsilon}}\right )^2 + \frac{k^2}{4 \epsilon} + l - h \right ) e^{c (\sqrt{\epsilon} z - x_+(k,l))\sqrt{1+k^2}} \sqrt{\epsilon (1+k^2)} \ \mathrm{d} z
\]
for any $(k,l)$ corresponding to a line contained in $\{y<\epsilon x^2\}$ and passing through the region $\{ y > \epsilon x^2 - h\}$. The uniqueness of $\tilde{f}$ in the region $\{y<\epsilon x^2\}$, and therefore also $f$ in the same region, now follows from \cite[Theorem 3.1]{Bukgeim} since we have that
\[
a = e^{c (\sqrt{\epsilon} z - x_+(k,l))\sqrt{1+k^2}} \sqrt{\epsilon (1+k^2)}
\]
is an analytic function of $k/(2\sqrt{\epsilon})$ and $z$ provided the imaginary part of $k$ is sufficiently small.
\end{proof}

\noindent Lemma \ref{lem:f} now allows us to complete the proof of Theorem \ref{thm2}. Indeed, the induction step is already proved as described just above the lemma. The base case is also included in Lemma \ref{lem:f} since we can determine the set $C_0$ as in the lemma by looking at the support of $R_af$, and then we always know $f|_{\mathbb{R}^2 \setminus C_0} = 0$ and $a|_{\mathbb{R}^2 \setminus C_1} = 0$. This completes the proof of Theorem \ref{thm2}.

\section{Numerical method} \label{sec:nummethod}
				    			    
We now turn our attention to numerically recovering $a$ and $f$ from data. We begin by first outlining how we discretize the domain. Let $\Omega$ be the domain of interest, and split $\Omega$ into $M^{2}$ square pixels of resolution $\mathrm{d}x$. We order the pixels lexicographically from the top left to the bottom right. We then assume that $a$ and $f$ are piecewise constant over each pixel. Recall that for an oriented line given by $(s,\theta)$ we define the AtRT via
				    			    
\begin{equation}\label{discretepart}
R_{a}f(s,\theta)=\int_{-\infty}^{\infty}\!f(s\theta^{\perp}+t\theta)e^{-Da(s\theta^{\perp}+t\theta,\theta)}\,\mathrm{d}t
\end{equation}
where $s$ is the signed closest approach to the origin and $\theta$ is a unit direction tangent to the line and giving the orientation. Since $a$ and $f$ are piecewise constant on the pixels, we can evaluate \eqref{discretepart} exactly as follows. Let $P$ be a list of the pixels passed, in the order in which they are passed, along the oriented line and denote the length of $P$ by $N$. Note for this to be well-defined we need the ray to be oriented.  Let $K$ be the ordered set of $t$ values which correspond to an intersection with an edge of a pixel in the grid and let $IT$ be the set of distances between adjacent entries in $K$. Using this notation we find 
				    			    
\begin{equation}\label{DAtRT}
R_af(s,\theta)=\sum_{i=1}^{N}f_{P(i)}IT(i)e^{-\frac{IT(i)a_{P(i)}}{2}}\mathrm{sinhc}\left(\frac{IT(i)a_{P(i)}}{2}\right)S(i).
\end{equation}
where $a_{P(i)},f_{P(i)}$ are the values of $a$ and $f$ in the $P(i)$th pixel,				    			    
\begin{equation}\label{di}
\mathrm{sinhc}(z)=\begin{cases}				    			    
\hfill \frac{\mathrm{sinh}({z})}{{z}}\hfill &  { z \neq 0}\\				    			    
\hfill 1 \hfill & { z=0}\\
\end{cases}
\end{equation}
and $S(N)=1$, $S(i-1)=S(i)e^{-IT(i)a_{P(i)}}$. This allows us to rewrite the AtRT as a vector equation involving $a$ and $f$. If we are given data vector $d$ for a set $\mathcal{I}$ of oriented lines $(s_{i},\theta_{i})_{i\in\mathcal{I}}$ then we can combine all of these vector equations into a matrix equation 
\begin{equation}
\label{eq:discrete} R[a]f=d.
\end{equation}
The discretised problem of interest is then to determine both $a$ and $f$ from $d$ given by \eqref{eq:discrete} where $a$ is multi-bang with the admissible set $\mathcal{A} = \{a_0,a_1,...,a_n\}$ known (note that for notational convenience we have reindexed the admissible values relative to Definition \ref{def:multibang}). We attempt to do this by solving the variational problem
\begin{equation}\label{variationalproblem}				    			    
\mathrm{argmin}_{a,f}\mathcal{R}(a,f):=\|R[a]f-d\|^{2}+\alpha\mathcal{M}(a)+\lambda\TV(a)+\eta\TV(f)			    			    
\end{equation}
where $\TV$ is a discrete version of the total variation and $\mathcal{M}$, which will be described below, is used to enforce the multi-bang assumption. The known set of admissible attenuation values is $A:=\{a_0,a_1,...,a_n\}$ with $a_{0}<a_1<...<a_n$. Recent work in \cite{MB,MBorig} attempted to design a convex regularizer to promote multi-bang solutions when the admissible set is known. The original idea in \cite{MB} was to make a convex penalty with jumps in gradient at admissable values. In this paper we instead use a modified, non-convex, version of the multi-bang penalty given by
\begin{equation}\label{globalMB}
\mathcal{M}(a):=\int_{\Omega}m(a(x))\mathrm{d}x
\end{equation}
where
\begin{equation}\label{multibang}
m(t)=
\left \{
\begin{array}{cl}
(a_{i+1}-t)(t-a_{i}), & t\in[a_{i},a_{i+1}]\\
\infty, &\mathrm{otherwise}.
\end{array}
\right .
\end{equation}
Compared to the convex multi-bang penalty from \cite{MB}, this has the advantage of giving a proximal map which has multi-bang values as stationary points.
%This is version of the multi-bang penalty is weakly convex\cite{weakconvex} with $\rho=2$.
Note that in the discrete case we consider piecewise constant $a$ and so \eqref{globalMB} is really a sum over pixels given by
\begin{equation}\label{discreteMB}
\mathcal{M}(a):=\sum_{i=1}^{M^{2}}m(a(i)).				    			    
\end{equation}
Strictly speaking \eqref{discreteMB} should have a factor of $\mathrm{d}x^2$ in front of the summation but this gets absorbed by the regularization parameter $\alpha$ and so we omit it.

Although the regularizer \eqref{discreteMB} promotes multi-bang solutions it provides no spatial regularity, and so we also include total variation\cite{TV} regularization as a joint regularizer. Total variation has been widely studied and is well known to promote piecewise constant images with small perimeter\cite{TV,MB}. This combination, at least numerically, allows us to significantly reduce the number of projections required to obtain a good reconstruction. For practical implementation we use a smoothed version of the isotropic total variation \cite{TV}
\begin{equation}\label{isoTVMAT}
\begin{split}
\TV_{c}(a)=&\sum_{i=1}^{M^{2}-1}\sqrt{\|D_{i}a\|^{2}_{2}+c},\\				    			    
\end{split}
\end{equation}
where $c>0$ is a small smoothing constant and each $D_{i}\in\mathbb{R}^{2\times M^{2}}$ is a finite difference matrix satisfying
\begin{equation}\label{eq:Di}
D_{i}a=\begin{cases}
\begin{pmatrix}a(i)-a(i+1)\\a(i)-a(i+M)\end{pmatrix}& \mathrm{if}~1\leq i\leq M^2-M~\&~\mod(i,M)\neq 0\\				    			       
\begin{pmatrix}0\\a(i)-a(i+M)\end{pmatrix}& \mathrm{if}~1\leq i\leq M^2-M~\&~\mod(i,M)= 0\\				    			       
\begin{pmatrix}a(i)-a(i+1)\\0\end{pmatrix}& \mathrm{if}~M^2-M+1\leq i\leq M^2-1.\\				    			       
\end{cases}
\end{equation}
Note that the smoothness of the total variation is required in order to guarantee global Lipschitz continuity of its gradient.
				    			    
At this point we would like to mention that although \eqref{isoTVMAT} is convex and the non-convex multi-bang regularizer is weakly convex, we still have to be careful with the data fidelity term. It can be shown that for sufficiently large $a$, $\|R[a]f-d\|^2$ may be non-convex. Because of this we use the following alternating minimization scheme \cite{alternating} designed for non-convex objective functions
				    			     
\begin{equation}\label{Alternating1}
\begin{split}
a^{k+1}&\in\argmin_{a}\mathcal{R}(a,f^{k})+\frac{1}{2\xi^{k}}\|a-a^{k}\|^{2},\\				    			     
f^{k+1}&\in\argmin_{f}\mathcal{R}(a^{k+1},f)+\frac{1}{2\xi^{k}}\|f-f^{k}\|^{2},				    			     
\end{split}
\end{equation}
for sufficiently small $\{\xi_{k}\}_{k=1}^\infty$. We first turn our attention to the $a$ update.
				    			    
\subsection{Updating attenuation $a$}
				    			 
Since we only concern ourselves with parts of the objective function $\mathcal{R}(a,f)$ involving $a$, the $a$ update in \eqref{Alternating1} is equivalent to
\begin{equation}
\label{aupdate}
a^{k+1}\in\argmin_{a}\|R[a]f^{k}-d\|^{2}+\alpha\mathcal{M}(a)+\lambda{\TV}_{c}(a)+\frac{1}{2\xi^{k}}\|a-a^{k}\|^{2}.
\end{equation}
For the purpose of solving this optimisation problem we introduce two auxiliary variables which are $x$ corresponding to $a$ itself, and $y$ corresponding to the discrete derivative of $a$. These two parts are linked by the matrix equation 				    			    
$$Dx=y$$ 
where $D$ is the finite difference matrix obtain by stacking all the $D_{i}$ defined by \eqref{eq:Di} on top of each other. We further split $y$ into a series of 2 by 1 column vectors which are linked to $x$ by the matrix equations 				    			    
$$D_{i}x=y_{i}.$$
Therefore, we can rewrite \eqref{aupdate} as 				    			    
\begin{equation}\label{admmaup}
\begin{split}
a^{k+1}\in&\argmin_{x}\|R[x]f^{k}-d\|^{2}+\alpha\mathcal{M}(x)+\lambda\sum_{i=1}^{M^{2}-1}\sqrt{\|y_{i}\|^{2}_{2}+c}+\frac{1}{2\xi^{k}}\|x-a^{k}\|^{2},\\
&\mathrm{subject~to}~Dx=y.
\end{split}
\end{equation}
A standard algorithm for solving a optimization problem in the form of \eqref{admmaup} is the Alternating Direction Method of Multipliers (ADMM)\cite{addm}. In this case the augmented Lagrangian\cite{addm} is given by 				    			
\begin{equation}\label{augmentedLagrangian1}
\begin{split}			    			    
\fL(x,y,\mu)& =\sum_{i=1}^{M^{2}-1}\left(\lambda \sqrt{\|y_{i}\|^{2}_{2}+c} -{\mu}_{i}^{T}({y}_{i}-D_ix)+\frac{\beta}{2}\|{y}_{i}-D_{i}x\|^{2}\right)\\
& \hskip2cm+\|R[x]f^{k}-d\|_2^{2}+\alpha\mathcal{M}(x)+\frac{1}{2\xi^{k}}\|x-a^{k}\|^{2}.
\end{split}		    			    
\end{equation}
where $\beta>0$ and $\mu_{i}$ are Lagrange multipliers related to $y_i$. We also define a vector $\mu$ given by placing the $\mu_i$ related to $y_i$ in the same positions in $\mu$ as the corresponding $y_i$ are in $y$. The ADMM algorithm for solving \eqref{aupdate} then proceeds as follows
\begin{equation*}
\begin{split}
x^{l+1}&=\argmin_{x}\fL(x,y^{l},\mu^{l}),\\				    			    
y^{l+1}&=\argmin_{y}\fL(x^{l+1},y,\mu^{l}),\\				    			    
\mu^{l+1}&=\mu^{l}+\beta(y^{l+1}-Dx^{l+1}).				    			    
\end{split}
\end{equation*}
Removing terms not involving $x$, we see that the $x$ update for $x^{l+1}$ can be calculated using the first order optimality condition
\begin{equation*}
\begin{split}
0&\in\partial_{x}\left\{ \|R[x^{l+1}]f-d\|^{2}+\frac{\beta}{2}\|y^{l}-Dx^{l+1}\|^{2}+\frac{1}{2\xi^{l}}\|x^{l+1}-a^{k}\|^{2}-\mu^{T}(y^l-Dx^{l+1})+\alpha \mathcal{M}(x^{l+1})\right\}\\
0&\in \nabla_{x}(\|R[x^{l+1}]f-d\|^{2})+\beta D^{T}(Dx^{l+1}-y^l)+D^{T}\mu+\frac{1}{\xi^{l}}(x^{l+1}-a^{k})+\partial_{x}\alpha\mathcal{M}(x^{l+1}).
\end{split}
\end{equation*}
where $\nabla_{x}( \|R[x^{l+1}]f-d\|^{2})$ is determined from \eqref{DAtRT} as in \cite{Phil}. Note that since the non-convex multi-bang regularizer is separable, we have the elementwise optimality condition
\begin{equation} \label{eq:elwise}
\begin{split}
0& \in \nabla_{x}(\|R[x^{l+1}]f-d\|^{2})(i)+\beta D^{T}( Dx^{l+1}-y^l)(i)+D^{T}\mu(i)\\
&\hskip4cm +\frac{1}{\xi^{l}}(x^{l+1}(i)-a^{k}(i))+\partial_{x}\alpha{m}(x^{l+1}(i)).
\end{split}
\end{equation}
Now as $\nabla_{x}(\|R[x^{l+1}]f-d\|^{2})+\beta D^{T} (y^l -Dx^{l+1})+D^{T}\mu+\frac{1}{\xi^{l}}(x^{l+1}-a^{k})$ is differentiable it is Lipschitz continuous. Furthermore, as the pointwise multi-bang regularizer is weakly convex by \cite{weakconvex} the pointwise multi-bang regularizer admits a well-defined proximal map
\begin{equation*}
\mathrm{prox}_{\frac{1}{t}} \alpha m(x)=\begin{cases} a_{0}&\mathrm{if}~x\leq x_{0,+}\\				    			    
a_{i}&\mathrm{if}~x_{i,-}\leq x \leq x_{i,+}~\mathrm{for~}i\in\{1, 2,\ ...\ , n-1\}\\				    			    
a_{n}&\mathrm{if}~x_{n,-}\leq x\\
\frac{1}{1-2\alpha t}\bigg(x-\alpha t(a_{i+1}+a_i)\bigg)&\mathrm{if}~x_{i,+}<x<x_{i+1,-}~\mathrm{for}~i\in\{0,1,\ ...\ , n-1\}
\end{cases}
\end{equation*}
where 
\begin{equation*}
\begin{split}		 	
x_{i,-}&=a_i-\alpha t(a_{i}-a_{i-1})~\mathrm{for}~i=1,\ ...\ , n,\\				    			    
x_{i,+}&=a_i+\alpha t(a_{i+1}-a_{i})~\mathrm{for}~i=0,\ ...\ , n-1.			    			    
\end{split}
\end{equation*}
$\mathcal{A}=\{a_0,a_1,\ ...\ , a_n\}$ is the admissable set and $\frac{1}{2}>\alpha t>0$.				    			   
\begin{figure}
\centering
\includegraphics[scale=0.4]{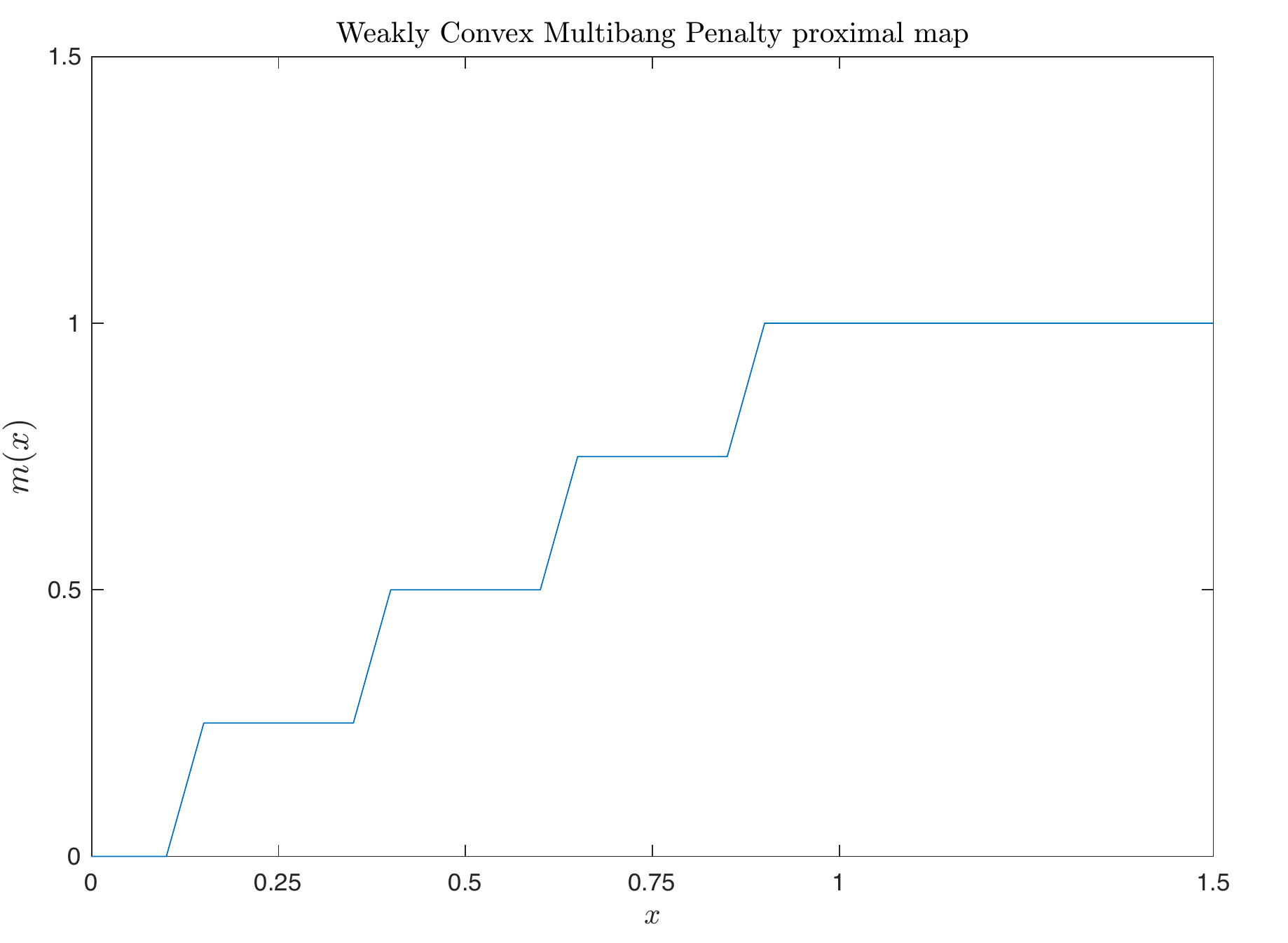}				    			    		
\caption{Weakly convex proximal map}\label{weakconvex}				    			    
\end{figure}
Figure \ref{weakconvex} gives an example of the proximal map for the weakly convex multi-bang regularizer when the admissible set is $\mathcal{A} = \{0,\ 0.25,\ 0.5,\ 0.75,\ 1\}$. Note in particular that $\mathrm{prox}_{\frac{1}{t}}\alpha {m}(a_{i})=a_{i}$ which is in contrast to the convex case \cite{MB}. Using this we can we find $x^{l+1}$ satisfying the optimality condition \eqref{eq:elwise} via a fixed point iteration such as ISTA or FISTA \cite{FISTA}. Indeed, provided $0<\alpha t<\frac{1}{2}$ by \cite{FISTA,weakconvex} both ISTA and FISTA produce iterates which converge to a solution $x^{l+1}$ of \eqref{eq:elwise} to within any prescribed tolerance.
				    			     
We now turn our attention to the $y$ update. The first order optimality condition for the $y$ update gives			
\begin{equation}\label{yupdate}			 
0=  \lambda\frac{y_{i}}{\sqrt{\|y_{i}\|^{2}_{2}+c}}-\mu_{i}+\beta(y_{i}-D_{i}x^{l+1})			 
\end{equation}
for all $i$. Whilst this cannot be explicitly solved for $y_i$ easily, we can make use of the gradient on the right hand side of \eqref{yupdate} to solve the $y$ update via gradient descent. Once we have updated all of the $y_i$ in this way we can combine them to update $y$. Finally since $\|R[a]f^{k}-d\|^{2}+\alpha\mathcal{M}(a)+\lambda{\TV}(a)$ is lower semi-continuous and $\sum_{i=1}^{M^{2}-1}\lambda{\sqrt{\|y_{i}\|^{2}_{2}+c}}$ has Lipschitz continuous gradient, \cite{Guo} gives convergence of ADMM to a critical point; that is, both the primal residual $r^{l}:=y^{l+1}-Dx^{l+1}$ and dual resdual $s_l:=\beta D^{T}(y^{l+1}-y^{l})$ converge. Numerically we can speed up the rate of convergence of the ADMM algorithm by having an adaptive $\beta$. We use the following scheme from \cite{addm}: pick $\beta^{0}>0$ then for $l\geq 0$ define
\begin{equation}\label{betaupdate}
\beta^{l+1}:=\begin{cases}
\hfill\tau^+\beta^{l}\hfill&\hfill\mathrm{if} \|r_{l}\|_2>\nu \|s_{l}\|_2\hfill\\				    			     	
\hfill\frac{\beta^{l}}{\tau^-}\hfill&\mathrm{if} \|s_{l}\|_2>\nu \|r_{l}\|_2\\				    			     	
\hfill\beta^{l}\hfill&\mathrm{otherwise}
\end{cases}
\end{equation}
for some chosen positive $\tau^{\pm}$ and $\nu$. This completes the $a$ update section of the numerical method. We now turn our attention to updating $f$.
				    			     
\subsection{Updating source radiation $f$}
				    			     
Removing terms not involving $f$, the $f$ update satisfies				    			     
\begin{equation}\label{fupdate}				    			     
f^{k+1}\in\argmin_{f}\|R[a^{k+1}]f-d\|^{2}+\eta \sum_{i=1}^{M^{2}-1}\sqrt{\|D_{i}f\|^{2}_{2}+c}+\frac{1}{2\xi^{k}}\|f-f^{k}\|^{2}
\end{equation}
again where $c>0$ is some small smoothing constant. %The objective function 				    			     
%\begin{equation}\label{fobjective}
%\|R[a^{k+1}]f-d\|^{2}+\eta \sum_{i=1}^{n^{2}-1}\sqrt{\|D_{i}f\|^{2}_{2}+c}+\frac{1}{2\xi^{k}}\|f-f^{k}\|^{2}				    			     
%\end{equation}
%is separable by splitting $f$ into two parts; a part representing the data fidelity and alternating minimization parts and a part representing the Total Variation. Since the objective function in \eqref{fobjective} is twice differentiable it has Lipschitz continuous gradient. Therefore 
We find $f^{k+1}$ solving this equation via ADMM \cite{addm,Guo} in a similar way to the $a$ update. However it is simpler here because we do not have the multi-bang regularization term. The method is again proven to converge to a critical point.
				    			    
With both the $a$ and $f$ update dealt with we are ready to outline the joint reconstruction algorithm.
\begin{algorithm}[H]
\caption{Joint reconstruction algorithm}\label{jointrecon}
\begin{algorithmic}[1]
\State Input $a^0$ as initial guess, step sizes $t,\beta^{0}$, tolerances $\delta_{1},\delta_{2},\delta_{3},\delta_4,\delta_5$ and regularization parameters $\alpha,\lambda$ and $\mu $.
\State Set $f^{0}$ to be the least squares solution of $\|R[a^{0}]f-d\|^{2}$.
\For{$k\geq 0$}
\State Set $x^0 = a^k$ and $y^0 = D x^0$.
\For{$l\geq 0$}
\State Update $x^{l+1}$ via ISTA or FISTA with $\delta_1$ as a tolerance on $\|x^{l+1}-x^{l}\|$.
\State Update $y^{l+1}$ via gradient descent on \eqref{yupdate}.
\State Set $\mu^{l+1}=\mu^{l}+\beta^{l}(y^{l+1}-Dx^{l+1})$.
\State Update $\beta^{l+1}$ via \eqref{betaupdate}
\State Terminate when $r^{l}<\delta_2$ and $s^{l}<\delta_{3}$ and output $a^{k+1} = x^{l+1}$.
\EndFor
\State Update $f^{k+1}$ via \eqref{fupdate} using ADMM with tolerance $\delta_4$.
\State Terminate when $\|a^{k+1}-a^{k}\|_{2}<\delta_5$ and $\|f^{k+1}-f^{k}\|_{2}<\delta_5$.
\EndFor
\end{algorithmic}
\end{algorithm}		
\noindent We point out that in this algorithm $\beta^0$ is reset to the same initialised value whenever the inner iterations aimed at the $a$ update in \eqref{Alternating1} (those indexed by $l$) restart. With the numerical method outlined we now present some numerical results.
    			    
\section{Numerical Reconstructions} \label{sec:numex}

Throughout this section we produce data on a 340 by 340 pixel grid and reconstruct on a 200 by 200 grid to avoid inverse crime. All of the following examples have $5\%$ added Gaussian white noise and were performed on a standard 4 core laptop using MATLAB. Note that much of the computational time is spent computing and recomputing the matrix representation of $R[a]$ when $a$ is updated, and many of the steps in this reconstruction can be done using parallel computing toolboxes. Unless otherwise stated the following reconstructions use 12 parallel ray projections which are equally spaced with some small perturbation to make the angles irrationally related (i.e. unless otherwise stated we only use data with 12 different values of $\theta$). Irrationally related angles have been shown to reduce the number of projections required to obtain good reconstructions\cite{chenrays,limrays}. For 12 projections the simultaneous reconstruction algorithm takes approximately 8 minutes.
				    		 
There are a large number of parameters to control which gives good flexibility but does require extensive parameter tuning in order to obtain optimal results. In the following examples we use  initial guesses where $a$ is constant. In practice convergence is obtained for all tested phantoms for any constant initial guess of $a$, provided the constant value lies between $a_0$ and $a_n$ in the admissable set. We therefore use initial guess $a^{0}=0$ for all numerical results presented here. The last general comment we make is that if we set $\xi=\infty$, effectively removing the added terms $\|a-a^{k}\|^{2}$ and $\|f-f^{k}\|^{2}$ from \eqref{Alternating1} we still obtain convergence. In many cases removing this part improves the speed of convergence, although the theoretical proof of convergence does not hold in this case. Throughout the section we fix $\xi=50$ and set all tolerances $\delta_i$ to $1\times 10^{-3}$.

\begin{figure}
\caption{Numerical reconstruction with joint regularizer versus total variation alone}\label{TVfig}				    		   	
\includegraphics[width=\textwidth]{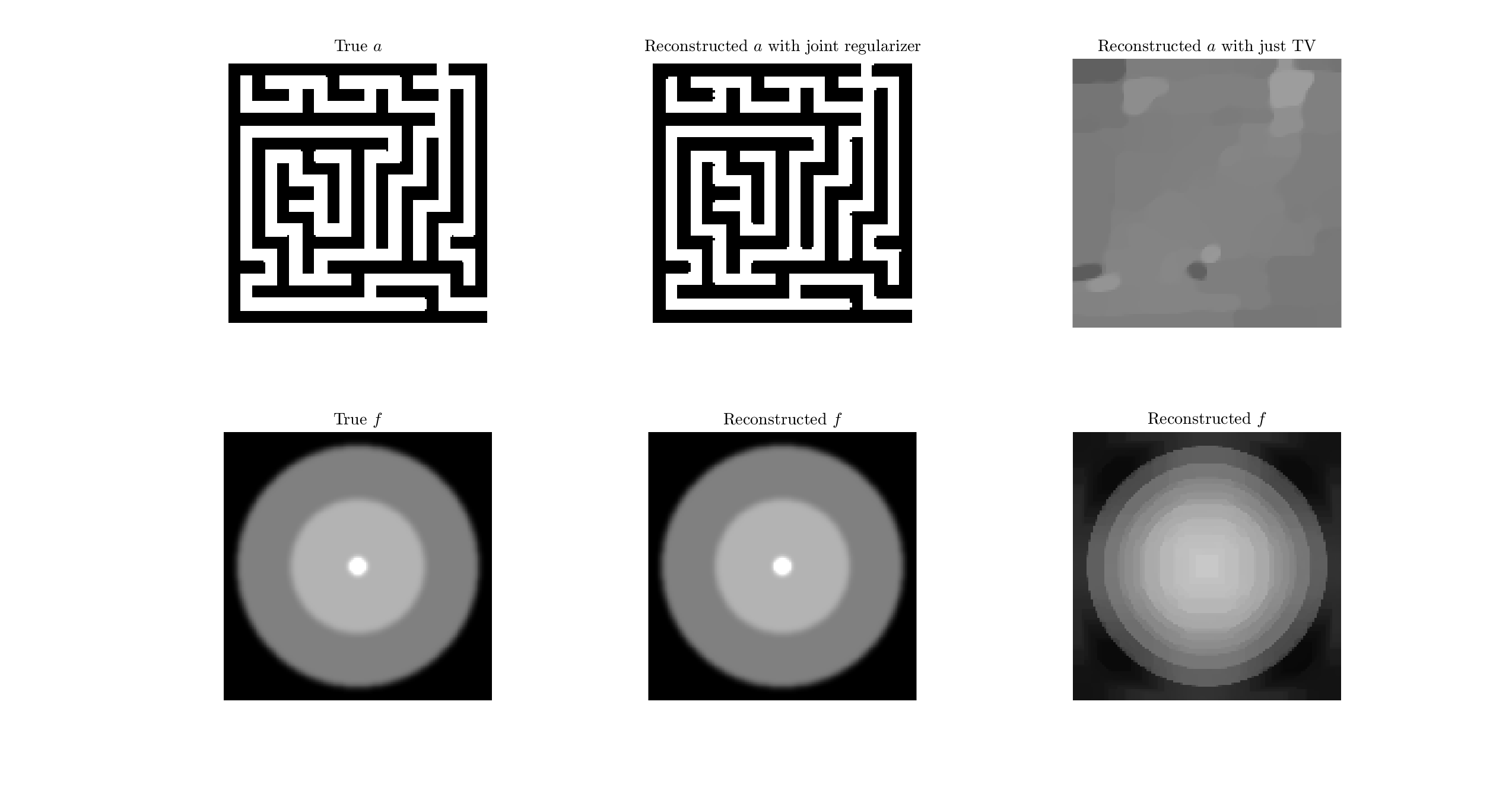}				    		   
\end{figure}				    		   
Figure \ref{TVfig} shows reconstructions obtained via an optimized parameter joint TV and multi-bang regularizer algorithm against those obtained purely by a TV approach. The left hand column gives the true phantoms for $a$ and $f$; $a$ is binary so here $\mathcal{A}=\{0,1\}$. The middle column shows the joint reconstruction for $a$ and $f$ with both TV and multi-bang regularization. Here the step sizes are $t=10$ and $\beta^{0}=0.1$. The regularization parameters are $\alpha=0.2$ and $\lambda=\eta=0.1$. The reconstruction for $a$ is multi-bang with the overall structure very well recovered. This is also seen in the reconstruction for $f$. We note that an L1 regularizer could have been used in the place of TV on $f$. In practice however TV performs much more favourably in removing cross talk-artefacts which are common in these types of joint reconstructions\cite{Bukgeim,Quinto,GourionNoll}. The right hand column is an optimized parameter reconstruction obtained using just TV with 80 projections. In this case both the binary nature and the structure of $a$ are lost, even with the extra data. This can also be seen in the recovery of $f$; the structure is roughly recovered but there are several ring artefacts appearing in the reconstruction.

\begin{figure}
\caption{Numerical reconstructions showing effect of the number of projections used  }\label{projection}				    		   	
\includegraphics[width=\textwidth]{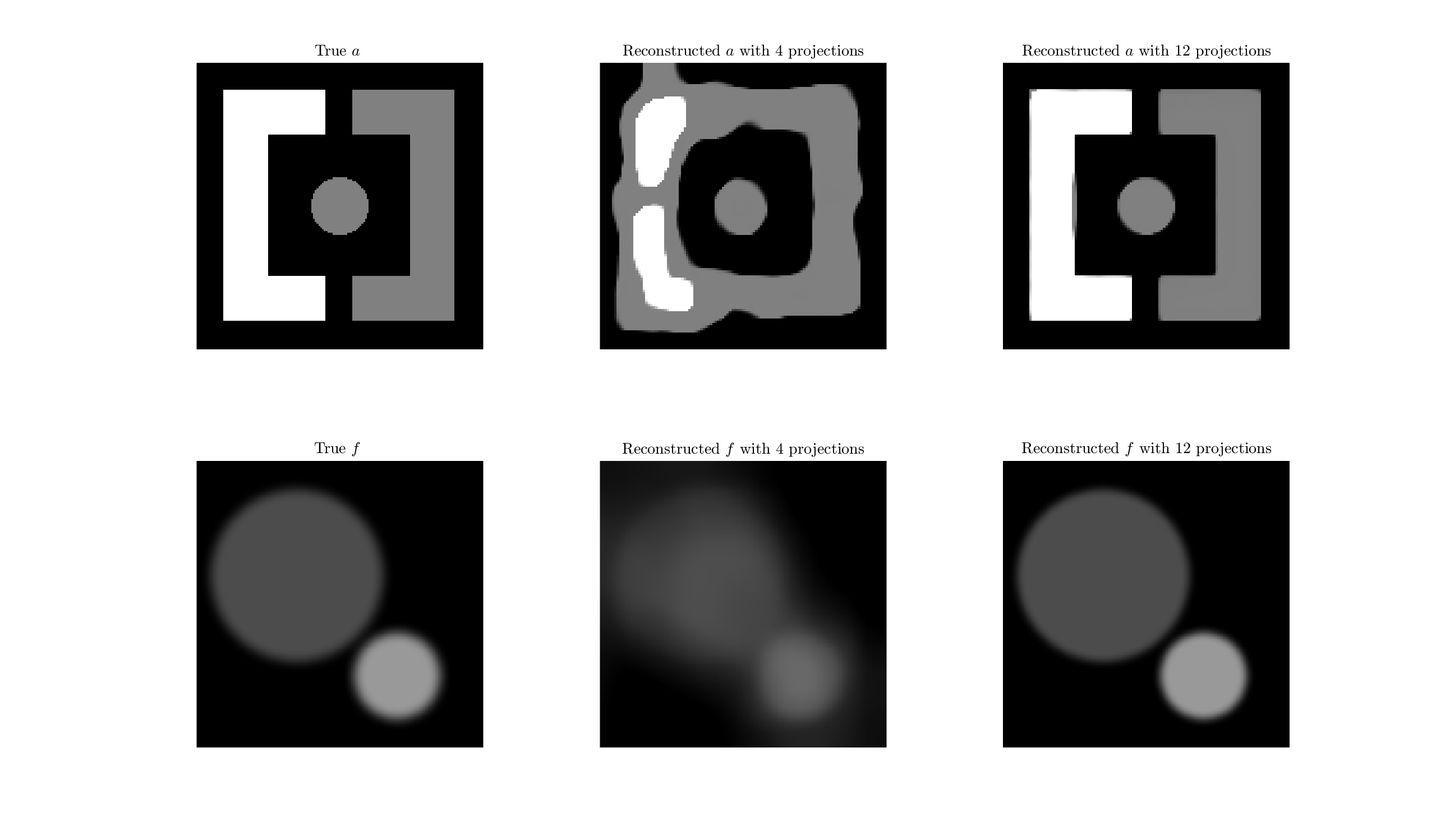}				    		   
\end{figure}				    		   
Figure \ref{projection} shows the effect of the number of projections on reconstruction quality. Here the phantom is made up of 3 regions with $\mathcal{A}=\{0,0.5,1\}$. The left column shows the true phantoms for $a$ and $f$. The middle column is an optimized reconstruction using 6 projections with $\alpha=0.1$ and $\lambda=\eta=0.05$. The right column shows an optimized reconstruction using 12 projections with $\alpha=0.1,\lambda=0.05$ and $\eta=0.15$. In both reconstructions for $a$ we obtain multi-bang solutions. The middle column shows a poor recovery of the structure of $a$ and $f$. The recovered $a$ has a lot of misclassification and has been unable to separate the regions. The inaccuracies in $a$ have an impact on the recovery of $f$, with the outer most regions of $f$ being poorly recovered. The rightmost column is a very good recovery of both $a$ and $f$, with just a small section on the left bracket being misclassified. The matching $f$ is also very well recovered. Although the smoothed TV regularization removes cross talk artefacts it is important not to use too large $\eta$ as this removes the continuous nature of $f$ at the edges. We also remark that there is no significant improvement in the quality of the reconstruction if we increase the number of projections further.

\begin{figure}
\caption{Graph of percentage multi-bang pixels at each iteration}\label{mbvaluegraph}				    		   	
\begin{center}
\includegraphics[scale=0.2]{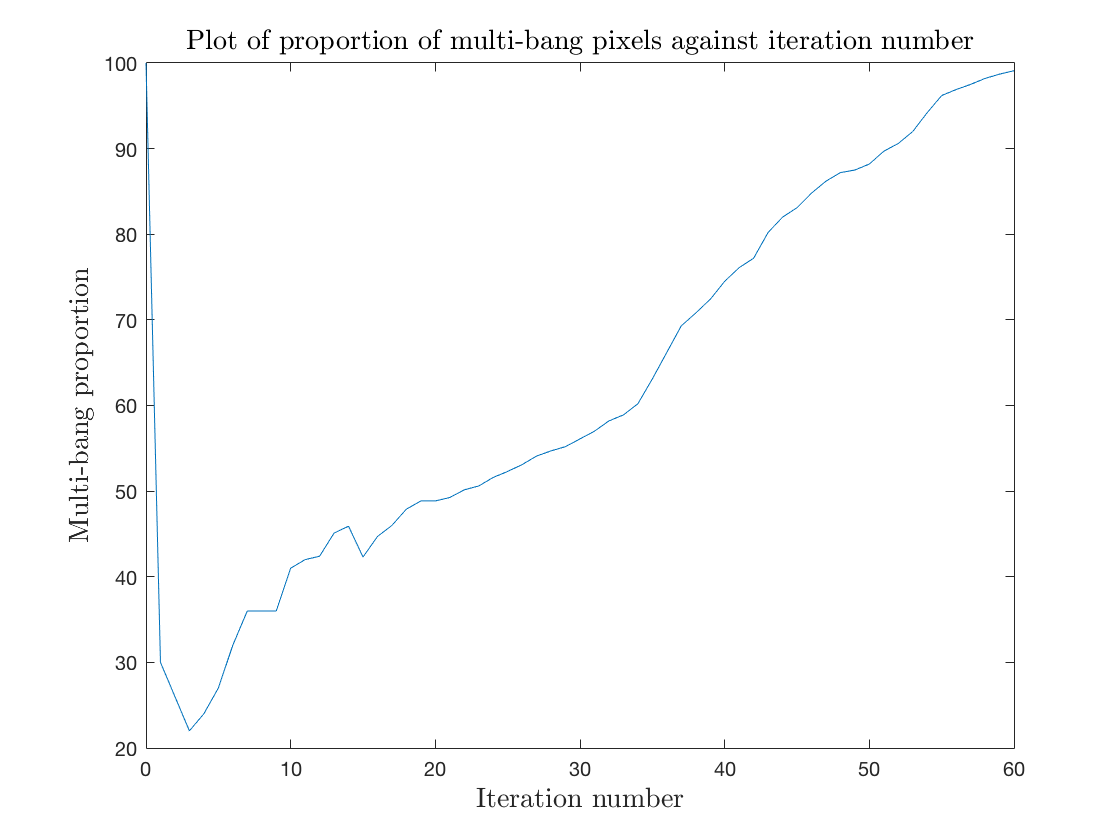}				    		   	
\end{center}
\end{figure}
Figure \ref{mbvaluegraph} shows a plot of the proportion of pixels taking admissible values against outer iteration number (that is $k$ in Algorithm \ref{jointrecon}) for the rightmost reconstruction of $a$ in Figure \ref{projection}. The initial guess is $a^0$ constant at 0, which is why the initial proportion is 1. The proportion increases monotonically after about 20 iterations with some large jumps before this point. These typically line up with $\beta^{l+1}$ being either increased or decreased in the inner iterations. This graph is typical for  reconstructions presented here and suggests that another suitable stopping criteria would be to terminate after a certain proportion of admissible values is reached. Typically a convergent reconstruction has a multi-bang proportion of over 0.95 by the time the algorithm is terminated by the step size tolerances.

\begin{figure}
\centering
\caption{Numerical reconstructions showing the effect of shrinking the size of $\mathrm{supp}(f)$.}\label{fshrink}				    		   
\includegraphics[width=.8\textwidth]{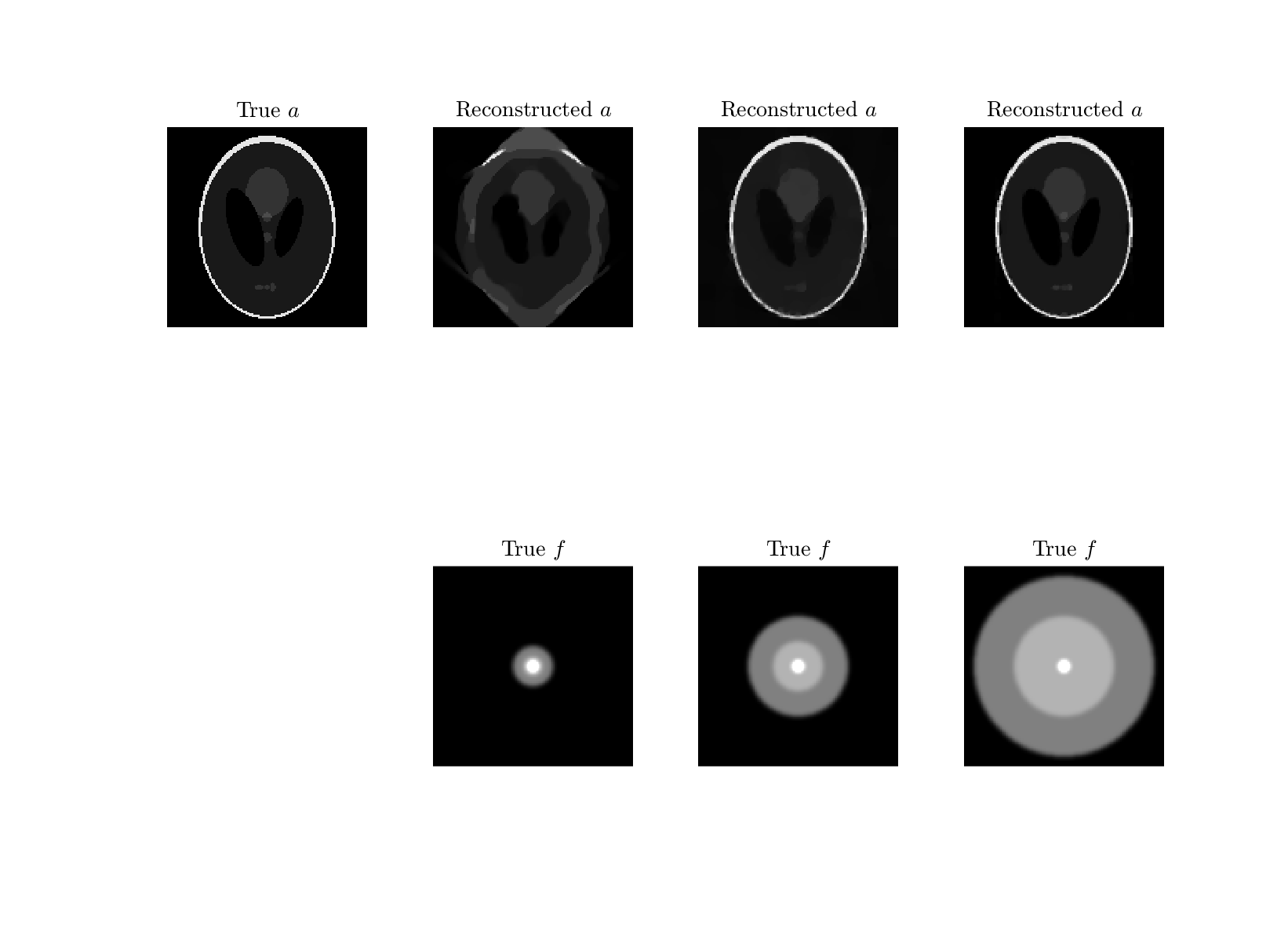}				    		   
\end{figure}
Figure \ref{fshrink} shows the effect of shrinking the size of the support of $f$ on reconstruction. This is linked to the proof of uniqueness from Theorems 1 and 2, where we require $f$ to be non-zero on a sufficient number of rays tangent to the jumps in $a$. Here the true $a$ is a multi-bang version of the Shepp-Logan phantom, with admissable set $\mathcal{A}=\{0,0.2,0.3,0.4,1 \}$. The step sizes are $t=0.075$ and $\beta^{0}=0.1$ and regularization parameters are $\alpha=0.1,\lambda=0.05$ and $\eta=0.15$ for each reconstruction. The top row shows reconstructions of $a$ with fixed and known true $f$ in the bottom row. The reconstruction algorithm here is then performed by simply performing one update for $a$. The rightmost reconstruction captures the shape and classifies almost all pixels correctly; most importantly it captures the smallest regions well. Note here that as in Theorem 1 and 2 $f$ has a larger support than $a$. It is possible to obtain reconstructions similar to that of the rightmost recovery for $f$ which have a slightly smaller support than $a$. The 2nd column from the right also has a decent shape recovery but has lost some of the finer features such as the ellipses at the bottom. The 2nd column from the left does a poor job of the recovery of $a$ with very few pixels correctly classified as $1$. The high valued outer ellipse is lost and all the finer details are missing. This is a similar effect to reducing the number of projections, which could be expected as reducing $f$ leads to fewer rays contributing data per projection. We can however still see some detail outside of the support of $f$, this is due to TV being able to fill in the gaps and extend our visibility. The angled straight edges in the reconstruction are related to the angles of the projections in the data set.

\begin{figure}
\caption{Numerical reconstruction of walnut phantom for $a$ with 30 projections}\label{walnut}				    		   	
\includegraphics[width=\textwidth]{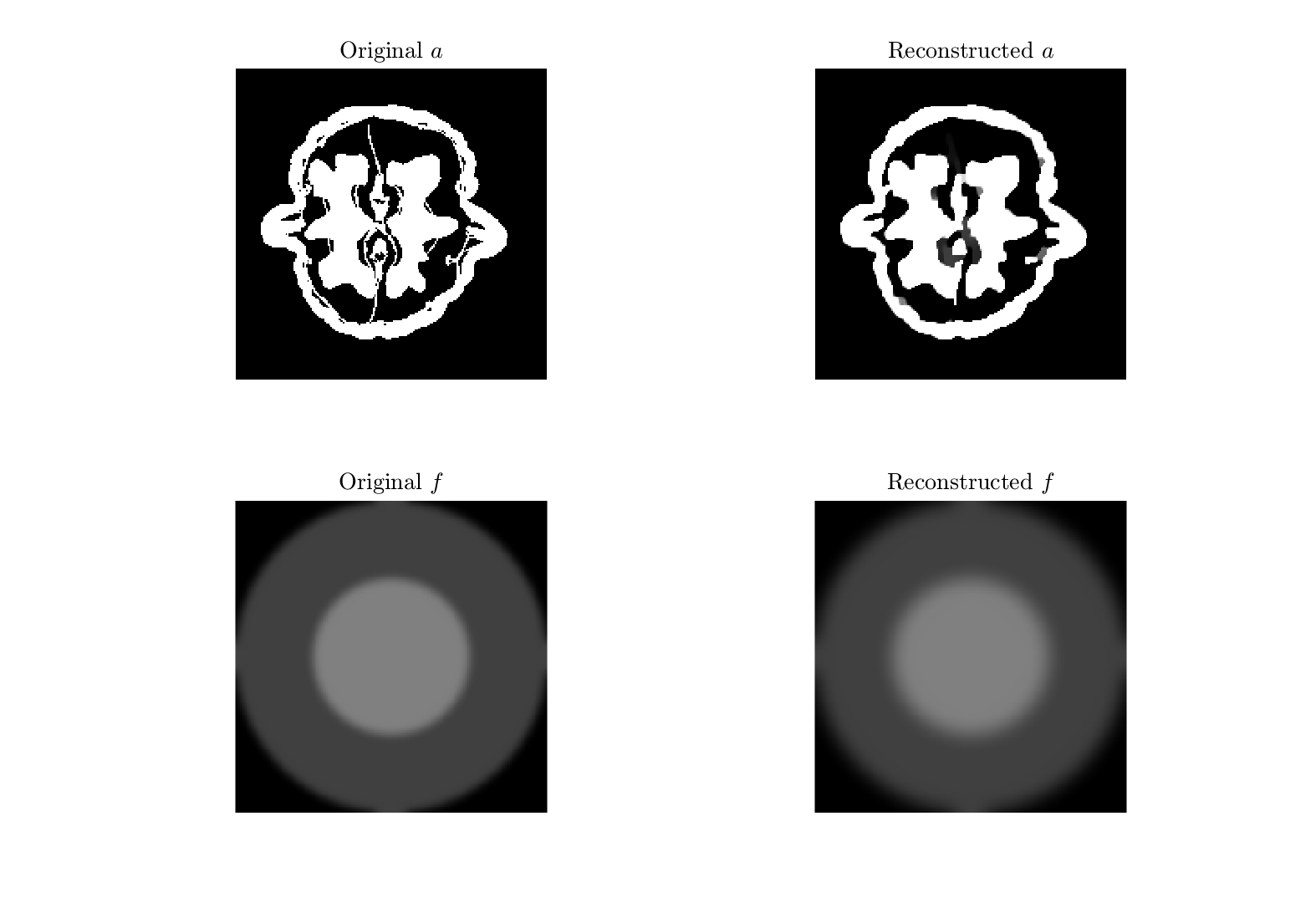}				    		   
\end{figure}
The final numerical result we present in Figure \ref{walnut} is obtained using an image of a walnut reconstructed from CT data by the Finnish Inverse Problems Society \cite{finnish} as the attenuation map. The walnut phantom for $a$ is binary and so $\mathcal{A}=\{0,1\}$. Here the step sizes are $t=0.05$ and $\beta^{0}=0.2$ and the regularization parameters are $\alpha=0.2,\lambda=0.1$ and $\eta=0.15$. The left hand column is the true $a$ and $f$ and the right hand column the reconstruction. In general the larger structures of $a$ are recovered but the finer details are lost. This is still true even if the number of projections is upped significantly. This is in part due to the detail being compressed going from 340 by 340 to 200 by 200 pixels and the TV regularizer eliminating the smallest non-zero regions. The larger sections are well-recovered and the outermost boundary is very well classified. The reconstruction for $f$ is again good with the areas towards the boundaries being the areas most affected by the errors in $a$. There are no cross-talk artefacts present, even with the more complicated $a$. 

\section{Conclusion}

In this paper we have presented and proved two theorems on the identification problem for SPECT involving multi-bang attenuation. In particular, for nicely multi-bang $a$ and $f \in C_c^1(\mathbb{R}^2)$ non-negative with sufficiently large support we have shown uniqueness of joint recovery for the AtRT. The method of proof for these theorems gives possible methods to produce similar results with further relaxed conditions on $a$ and $f$, and we intend to investigate this in future work.
				    		  
On the numerical side, we have formulated a variational problem including a weakly convex version of the convex multi-bang regularizer\cite{MB,MBorig} and a smoothed total variation, and presented an algorithm for simultaneous recovery of $a$ and $f$ from the resulting variational problem \eqref{variationalproblem}. Using an alternating direction approach for non-convex objective functions \cite{alternating} coupled with ADMM\cite{addm} we are able to successfully solve these variational problems. The addition of a joint multi-bang and total variation regularizer has produced good results for joint recovery with projection numbers similar to those used in the X-Ray recovery case when the image is known to have only a finite number of values \cite{chenrays,limrays}. The apparent convergence of the algorithm even in the case $\xi_k = \infty$ in all cases we have investigated, and also independent of the smoothing parameter $c$ for the total variation presents theoretical questions for future work. Also, even though the variational problems on which our method is based are non-convex, our algorithm consistently converges to a reasonable approximation for the correct solution. This suggests we may actually be finding the global minimiser, or be getting close to the global minimiser, and there is potential for future research investigating whether this is indeed correct.

Finally, the numerical examples shown in Figure \ref{fshrink} indicate that we are able to obtain good recovery for $a$ even when the support of $f$ is smaller than required in our theoretical results. We suspect that the total variation may be playing a role in filling in the boundaries of regions of constant $a$, and would like to investigate this further as well.

\bibliography{Multimatbib}{}
\bibliographystyle{plain}    			    

\end{document}